\newcommand\figcaption{\def\@captype{figure}\caption}
\newcommand\tabcaption{\def\@captype{table}\caption}
\newcommand{\be}{\begin{eqnarray}}
\newcommand{\ee}{\end{eqnarray}}
\newcommand{\ben}{\begin{eqnarray*}}
\newcommand{\een}{\end{eqnarray*}}
\def\be{\begin{equation}}
\def\ee{\end{equation}}
\def\bes{\begin{equation*}}
\def\ees{\end{equation*}}
\def\beq{\begin{eqnarray}}
\def\eeq{\end{eqnarray}}
\def\beqs{\begin{eqnarray*}}
\def\eeqs{\end{eqnarray*}}
\def\bal{\begin{aligned}}
\def\eal{\end{aligned}}
\def\bsqs{\begin{subequations}}
\def\esqs{\end{subequations}}
\newtheorem{theorem}{Theorem}[section]
\newtheorem{lemma}[theorem]{Lemma}
\theoremstyle{definition}
\theoremstyle{definition}
\theoremstyle{remark}
\newtheorem{remark}[theorem]{Remark}
\newtheorem{example}[theorem]{Example}
\newtheorem{assumption}[theorem]{Assumption}
\newcommand{\definetitlefootnote}[1]{%
  \newcommand\addtitlefootnote{%
    \makebox[0pt][l]{$^{\bigstar}$}%
    \footnote{\protect\@titlefootnotetext}
  }%
  \newcommand\@titlefootnotetext{\spaceskip=\z@skip $^{\bigstar}$#1}%
}
\title[A finite element method for Singularly Perturbed problems]{A Mixed Finite Element Method for Singularly Perturbed Fourth Oder Convection - Reaction - Diffusion Problems on Shishkin Mesh}
\author[C. D. Wickramasinghe]{ Charuka D. Wickramasinghe}
\address{Department of Mathematics, University of Kentucky, Lexington, KY 40508, USA}
\email{cdwi263@uky.edu}
\begin{document}


\begin{abstract}
This paper introduces an approach to decoupling singularly perturbed boundary value problems for fourth-order ordinary differential equations that feature a small positive parameter $\epsilon$ multiplying the highest derivative. We specifically examine Lidstone boundary conditions and demonstrate how to break down fourth-order differential equations into a system of second-order problems, with one lacking the parameter and the other featuring $\epsilon$ multiplying the highest derivative. To solve this system, we propose a mixed finite element algorithm and incorporate the Shishkin mesh scheme to capture the solution near boundary layers. Our solver is both direct and of high accuracy, with computation time that scales linearly with the number of grid points. We present numerical results to validate the theoretical results and the accuracy of our method. 

\textbf{Keywords}: Shishkin mesh; finite element algorithm; boundary layers; convection-diffusion problems.
\end{abstract}

\maketitle


\section{Introduction}

In this paper, we consider the following stationary state fourth order singularly perturbed differential equation with Lidstone boundary conditions. 

\begin{equation}\label{e1}
\left.\begin{aligned}
 L_\epsilon &= -\varepsilon u^{iv}(x) - a(x)u^{'''}(x)+ b(x)u^{''}(x) = -f(x) \\
  u(0) &= 0,  \quad  u(1) =0, \quad 
u^{''}(0) = 0, \quad  u^{''}(1)=0
\end{aligned}\right\} \quad 
\end{equation}

The function $f(x)$, $a(x)$, and $b(x)$ are all smooth and satisfy $a(x)\geq \alpha > 0$ and $b(x)\geq \beta > 0$. The parameter $\epsilon$ is assumed to be a small positive value, such that $0< \epsilon \leq 1$.  While it may be easy to analytically solve this problem in some cases, finding the solution $u$ with analytical techniques can be difficult or even impossible for a general function $f$. The well-posedness of problem (\ref{e1}) has been discussed in more detail in sources such as \cite{Ehme} and \cite{sun}.

The convection-diffusion-reaction equation is used in three processes \cite{Makungu}: convection, which involves the movement of materials from one region to another; diffusion, which involves the movement of materials from an area of high concentration to an area of low concentration; and reaction, which involves decay, adsorption, and the reaction of substances with other components.
Singularly perturbed problems have many applications in engineering and applied mathematics, including chemicals and nuclear engineering \cite{Alhumaizi}, linearized Navier-Stokes equation at high Reynolds number (\cite{Amando}, \cite{Kreiss}), elasticity, aerodynamics, oceanography, meteorology, modeling of semiconductor devices \cite{Polak}, control theory, and oil extraction from underground reservoirs \cite{Ewing}, and in many other fields. However, solving these problems numerically presents major computational difficulties due to boundary layers, where the solution changes rapidly. The study of second-order singularly perturbed differential equations are quite large, as seen in the extensive literature (\cite{Doolan}, \cite{FARRELL},\cite{Kadalbajoo},\cite{Mohan},\cite{Natesan},\cite{ OMalley},\cite{Hirsch},\cite{Ewing},\cite{Kopteva}) and their references. However, very few studies have addressed singularly perturbed fourth-order boundary value problems in the literature.

The following paragraph presents a brief overview of analytical and numerical methods used for solving singularly perturbed fourth-order differential equations. In \cite{Vrabel}, Vrabel, R studied the existence of solutions for fourth-order nonlinear equations with the Lidstone boundary conditions. In \cite{El-Zahar}, Essam R introduced the Differential Transform Method (DTM) as an alternative to existing methods for solving higher-order singularly perturbed boundary value problems (SPBVPs). In \cite{VR}, Vrabel, R provided a detailed analysis of the boundary layer phenomenon subject to the Lidstone boundary conditions by analyzing the integral equation associated with the SPBVPs. In \cite{Sun1}, Sun, G, and Martin S developed Piecewise polynomial Galerkin finite-element methods on a Shishkin mesh, which achieved almost optimal convergence results in various norms. In 1991, Ross and Stynes studied a fourth-order problem and generated an approximate solution using patched basis functions. The method is uniformly first-order convergent in the $H^{1}$ [0,1] norm \cite{Roos}. In \cite{Ramanujam}, Shanthi, V., and N. Ramanujam transformed the SVBVP into a system of weakly coupled system of two second-order ODEs and then used the fitted operator method (FOM), fitted mesh method (FMM), and boundary value technique (BVT) to approximate the solution.

The problem being studied involves rapidly changing solutions in very thin regions near the boundary. Traditional numerical methods often fail to accurately capture these changes, which can result in errors across the entire domain. To address this issue, various methods such as Bakhavalov and Gartland meshes have been developed (\cite{Miller}, \cite{Martin}, \cite{Bakhvalov}). In this study, we analyze a standard finite element method combined with the Shishkin mesh, which is a type of local refinement strategy introduced by a Russian mathematician Grigorii Ivanovich Shishkin in 1988 \cite{Natalia}. Finite element methods on Shishkin meshes in 1D were first studied 1995 by Sun and Stynes \cite{sun}, the analysis for second order problems was also published in the two books (\cite{Miller}, \cite{Hans}) from 1996. The goal is to propose a new mixed finite element algorithm that is reliable, effective, and easy to implement, and can be used to solve higher order singularly perturbed differential equations. As far as the authors are aware, there is currently no finite element algorithm available to solve (\ref{e1}) using their decoupled formulation under the Shishkin mesh. The main advantage of decoupling system is that it reduces both memory space and time requirements.

\begin{remark}
For simplicity, the current paper focuses on analyzing a 1-dimensional problem where the coefficients $a(x)$ and $b(x)$ are both constant. However, the analysis could be extended to 2 dimensions and variable coefficients, although this may present some challenges. Additionally, the problem could be expanded to include non-homogeneous boundary conditions through a simple linear transformation.
\end{remark}

The rest of the article is organized as follows: In section 2 we introduce the decouple formulation of (1.1). In section 3 we present Shishkin mesh method and the finite element algorithm. We also present error estimate results of the decouple formulations. In section 4 we present numerical results to validate out theoretical results and conclusion is the section 5. Throughout the following text, the generic positive constants $C$, $a$ and $b$ may take different values in different formulas but is always independent of the mesh and the small positive parameter $\epsilon$.

\section{The Decouple Formulation}\label{sec2}

Hereafter, we will consider the following problem by setting $a(x)=a=$ constant and $b(x)=b=$ constant in equation (\ref{e1}).

\begin{equation}\label{e2}
\left.\begin{aligned}
 L_\epsilon &= -\varepsilon u^{iv}(x) - au^{'''}(x)+ bu^{''}(x) = -f(x) \\
  u(0) &= 0,  \quad  u(1) =0, \quad 
u^{''}(0) = 0, \quad  u^{''}(1)=0
\end{aligned}\right\} \quad 
\end{equation}

The function $f(x)$, $a(x)$, and $b(x)$ are assumed to be sufficiently smooth  for $0\leq x\leq 1$ where, 

\begin{equation}\label{bc}
\left.\begin{aligned}
a & \geq \alpha > 0 \\
b & \geq \beta > 0\\
a + \frac{1}{2}b^{'} & > c>0 \hspace{0.5cm} \mbox{for all }  x \in [0,1]
\end{aligned}\right\} \quad 
\end{equation}

Under the conditions in (\ref{bc}) the problem \ref{e2} is well posed \cite{sun}. Let $(\cdot,\cdot )$ denote the usual $L^2(0,1)$ inner product. We define the bilinear form of the equation (\ref{e2}) as follows:

\begin{equation}\label{bili}
A_{\varepsilon}(u,v) = (-\varepsilon u^{''}, v^{''}) + (au^{''}, v^{'}) - (bu^{'}, v^{'}) = -(f,v) \hspace{0.3cm} \mbox{for all} \hspace{0.3cm}  u,v \in H^{2}_0(0,1)
\end{equation}
The weighted energy norm is given by $$ |||v|||=\{ \varepsilon |v|^{2}_2 + ||v||^{2}_1 \}^{\frac{1}{2}} \hspace{0.3cm} \mbox{for all}   \hspace{0.3cm} v \in H^{2}_0(0,1)$$

\begin{lemma}\label{l2}
Assume (\ref{bc}) holds. Then there  exist positive constants $C_1$, and $C_2$ such that for all $u,v \in H^{2}_0(0,1)$, 

\begin{equation}\label{le1}
|A_{\varepsilon}(u,v)| \leq C_1 \varepsilon^{-{\frac{1}{2}}}|||u|||\cdot|||v|||
\end{equation}

and

\begin{equation}\label{le2}
C_2|||u|||^2 \leq |A_{\varepsilon}(u,u)| 
\end{equation}

\end{lemma}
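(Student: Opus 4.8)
The statement is the familiar continuity-and-coercivity (Lax--Milgram) pair for the weak form $A_\varepsilon$, so the plan is to establish the two estimates separately. In both I would estimate each of the three terms making up $A_\varepsilon(u,v)$ by the Cauchy--Schwarz inequality and then re-express the result through the two pieces $\sqrt{\varepsilon}\,|v|_2$ and $\|v\|_1$ of the weighted norm, using throughout the trivial bounds $\sqrt{\varepsilon}\,|v|_2 \le |||v|||$ and $\|v\|_1 \le |||v|||$ (both immediate from the definition of $|||\cdot|||$), the hypothesis $0<\varepsilon\le 1$ (so that $\varepsilon^{-1/2}\ge 1$), and the Poincar\'{e} inequality $\|v\|_{L^2}\le C_P\|v'\|_{L^2}$ valid on $H^2_0(0,1)$; here $|\cdot|_2$ denotes the $H^2$ seminorm $\|(\cdot)''\|_{L^2}$.

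For the continuity bound (\ref{le1}): from $|A_\varepsilon(u,v)|\le \varepsilon\,|(u'',v'')| + |a|\,|(u'',v')| + |b|\,|(u',v')|$ and Cauchy--Schwarz I would bound the first term by $\varepsilon\,|u|_2|v|_2 = \big(\sqrt{\varepsilon}\,|u|_2\big)\big(\sqrt{\varepsilon}\,|v|_2\big)\le |||u|||\,|||v|||$, the second by $|a|\,|u|_2\,\|v'\|_{L^2} = |a|\,\varepsilon^{-1/2}\big(\sqrt{\varepsilon}\,|u|_2\big)\|v'\|_{L^2}\le |a|\,\varepsilon^{-1/2}\,|||u|||\,|||v|||$, and the third by $|b|\,\|u'\|_{L^2}\|v'\|_{L^2}\le |b|\,\|u\|_1\|v\|_1\le |b|\,|||u|||\,|||v|||$. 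Summing and using $1\le\varepsilon^{-1/2}$ then gives (\ref{le1}) with $C_1 = 1+|a|+|b|$.

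For the coercivity bound (\ref{le2}): set $v=u$. The key observation is that the convection cross term vanishes,
\[
(au'',u') \;=\; \frac{a}{2}\int_0^1 \frac{d}{dx}\big(u'\big)^2\,dx \;=\; \frac{a}{2}\Big[\big(u'(1)\big)^2-\big(u'(0)\big)^2\Big] \;=\; 0,
\]
by the homogeneous boundary conditions attached to $H^2_0(0,1)$. Hence $A_\varepsilon(u,u) = -\varepsilon\,|u|_2^2 - b\,\|u'\|_{L^2}^2$ is of one sign, so $|A_\varepsilon(u,u)| = \varepsilon\,|u|_2^2 + b\,\|u'\|_{L^2}^2$. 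Invoking $b\ge\beta$ from (\ref{bc}) and Poincar\'{e} (whence $\|u\|_1^2 = \|u\|_{L^2}^2 + \|u'\|_{L^2}^2 \le (1+C_P^2)\,\|u'\|_{L^2}^2$) I would bound $b\,\|u'\|_{L^2}^2 \ge \frac{\beta}{1+C_P^2}\,\|u\|_1^2$, and therefore $|A_\varepsilon(u,u)| \ge \varepsilon\,|u|_2^2 + \frac{\beta}{1+C_P^2}\,\|u\|_1^2 \ge C_2\,|||u|||^2$ with $C_2 = \min\{1,\ \beta/(1+C_P^2)\}$.

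The only step beyond routine bookkeeping is the vanishing of $(au'',u')$, which is precisely where the homogeneous boundary conditions (together with the constant-coefficient reduction of Section~\ref{sec2}) enter; I note in passing that in this constant-coefficient setting the third hypothesis in (\ref{bc})---which reads simply $a>0$ when $b$ is constant---is not actually needed here, and that in the genuinely variable-coefficient case that cross term would instead leave a boundary contribution $\tfrac12\big[a(u')^2\big]_0^1$ together with a volume term $-\tfrac12(a'u',u')$, the control of which is exactly what the sign condition in (\ref{bc}) is designed to provide. For the constant-coefficient statement at hand, however, the argument above is complete.
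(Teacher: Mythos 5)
Your proof is correct, and for the continuity bound (\ref{le1}) it is essentially the paper's argument: Cauchy--Schwarz term by term plus $\varepsilon^{-1/2}\ge 1$ (the paper phrases this as $|A_\varepsilon(u,v)|\le C_1|||u|||\,\|v\|_2$ together with $\|v\|_2\le\varepsilon^{-1/2}|||v|||$). Where you genuinely diverge is the coercivity bound (\ref{le2}): the paper gives no argument at all, simply citing Lemma~2.1 of the Sun--Stynes reaction-diffusion paper \cite{Sun1}, whereas you give a short self-contained proof that exploits the constant-coefficient, $H^2_0(0,1)$ setting, in which the convection cross term $(au'',u')=\tfrac{a}{2}\bigl[(u')^2\bigr]_0^1$ vanishes identically because traces of $u'$ vanish on $H^2_0$; the rest is Poincar\'e. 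That is a cleaner and more elementary route than the citation, and it makes explicit that under the hypotheses as literally stated neither $a\ge\alpha>0$ nor the third condition in (\ref{bc}) is needed for (\ref{le2}). Two caveats are worth noting. First, your argument leans crucially on the space being $H^2_0(0,1)$ (so $u'(0)=u'(1)=0$); if one instead works in $H^2\cap H^1_0$, the space naturally associated with the Lidstone conditions $u''(0)=u''(1)=0$ of problem (\ref{e2}), the boundary term $\tfrac{a}{2}\bigl[(u')^2\bigr]_0^1$ survives, has no sign, and the Sun--Stynes-type structural hypotheses are then genuinely needed --- so your proof establishes the lemma as stated, but not the variant one would need for the Lidstone weak formulation. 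Second, in your closing remark on variable coefficients, the volume term produced by the cross term is $-\tfrac12(a'u',u')$, so the condition that controls it together with $-(bu',u')$ is of the form $b+\tfrac12 a'>0$, i.e.\ the condition appearing in (\ref{bc2}), not the condition $a+\tfrac12 b'>c$ of (\ref{bc}); this does not affect the constant-coefficient proof but the attribution should be corrected if you keep the remark.
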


\begin{proof}:  Based on Cauchy-Schwarz inequality it is easy to see that

\begin{equation}
|A_{\varepsilon}(u,v)| \leq \begin{cases}
C_1 |||u|||\cdot||v||_2\\
C_1 ||u||_2\cdot|||v||| 
\end{cases}
\end{equation}
Since $0 < \varepsilon \leq 1$ (\ref{le1}) follows immediately. The proof of (\ref{le2}) can be shown by following the proof of the lemma (2.1) in \cite{Sun1}.
\end{proof}

The weak formulation of (\ref{e2}) is to find $ u\in H^{2}_0(0,1) $ such that
\begin{equation}
A_{\varepsilon}(u,v) = (-f, v), \hspace{0.3cm} \mbox{for all}   \hspace{0.3cm} v \in H^{2}_0(0,1)
\end{equation}

By the Lax-Milgram lemma, (2.5) has a unique solution $u$  in $H^{2}_0(0,1)$.

We are now able to present our decoupled formulation. It is widely recognized that numerical solutions of higher order problems, such as (\ref{e2}), are significantly more challenging than those of lower order problems. To address this issue, we decouple (\ref{e2}) into a system of lower order differential equations, as follows:

\begin{equation}\label{e3}
\left.\begin{aligned}
-w^{''}(x) &=  f(x)  \hspace{0.5cm}\mbox{for} \hspace{0.5cm}  x \in (0,1) \\
w(0) & = 0,  \quad  w(1) = 0
\end{aligned}\right\} \quad
\end{equation}

\begin{equation}\label{e4}
\left.\begin{aligned}
 -\epsilon u^{''}(x) - au^{'}(x)+ bu(x) & = w(x) \\
  u(0) &= 0,  \quad  u(1) =0
\end{aligned}\right\} \quad 
\end{equation}

The equation represented by Equation (\ref{e3}) is a standard Poison equation, which has the same source term $f(x)$ as Equation (\ref{e2}). Assuming that $f(x)$ belongs to $L^2 (0,1)$ and the given boundary conditions for Equation (\ref{e3}) are met, the problem defined by Equation (\ref{e3}) is well-posed, according to \cite{Ciarlet}. These kind of finite element decouple formulations can be found from (\cite{dilhara1}),  (\cite{dilhara2}) for some particular problems in 2D.

Equation (\ref{e4}) is a second-order problem that involves a convection-reaction-diffusion process with a singular perturbation. The source term for this equation is represented by $w(x)$, which is the solution to the problem defined by Equation (\ref{e3}). Under following  assumptions, 

\begin{equation}\label{bc2}
\left.\begin{aligned}
a & \geq \beta > 0\\
b & \geq 0 \\
b + \frac{a^{'}}{2} & > 0 \hspace{0.5cm} \mbox{for all }  x \in [0,1]
\end{aligned}\right\} \quad 
\end{equation}

the problem defined by Equation (\ref{e4}) is also well-posed, as stated in \cite{Roos}.

In order to establish a connection between the solution $u$ obtained from Equations (\ref{e3}) and (\ref{e4}) and the fourth-order problem represented by Equation (\ref{e2}), we present the following lemma. Let us define $H^{m}(0,1)$ as the Sobolev space comprising functions whose $i^{th}$ derivative, $0 \leq i \leq m$, is square-integrable.

\begin{lemma}\label{l1}
The solution $u \in H^{4} (0,1)$ obtained through (\ref{e3}) 
 and (\ref{e4}) satisfies the following fourth order differential equation.
 
\begin{equation}\label{e5}
\left.\begin{aligned}
-\varepsilon u^{iv}(x) - au^{'''}(x)+ bu^{''}(x) &= -f(x) \\
  u(0) = 0,  \quad  u(1) =0, \quad 
u^{''}(0) = 0, \quad  u^{''}(1)&=0
\end{aligned}\right\} \quad 
\end{equation}
\end{lemma}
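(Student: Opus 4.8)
The plan is to verify the claimed fourth-order equation by substituting the decoupled system into it, essentially reversing the decoupling procedure. First I would observe that equation (\ref{e4}) reads $-\epsilon u'' - a u' + b u = w$, so applying $-\partial_x^2$ to both sides gives $-\epsilon u^{iv} - a u''' + b u'' = -w''$. By equation (\ref{e3}) we have $-w'' = f$, hence $-\epsilon u^{iv} - a u''' + b u'' = f$; multiplying through by $-1$ yields the differential equation in (\ref{e5}). Note that to differentiate (\ref{e4}) twice we need $u \in H^4(0,1)$, which is exactly the regularity asserted in the statement; I would justify this by a standard bootstrap/elliptic regularity argument: since $f \in L^2(0,1)$ and (\ref{e3}) is the Poisson problem, $w \in H^2(0,1)$; then (\ref{e4}) is a (uniformly, for fixed $\epsilon$) elliptic second-order ODE with smooth coefficients and right-hand side $w \in H^2$, so $u \in H^4(0,1)$ by interior-plus-boundary elliptic regularity on $[0,1]$.

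Next I would check the four boundary conditions. The conditions $u(0) = 0$ and $u(1) = 0$ come directly from the boundary conditions imposed in (\ref{e4}). For the conditions on $u''$, I would evaluate (\ref{e4}) at the endpoints: at $x = 0$, $-\epsilon u''(0) - a u'(0) + b u(0) = w(0)$; since $u(0) = 0$ and $w(0) = 0$ (the boundary condition in (\ref{e3})), this gives $-\epsilon u''(0) - a u'(0) = 0$. This is not immediately $u''(0) = 0$ unless $u'(0) = 0$, so here I need to be careful — the Lidstone condition $u''(0) = 0$ requires an extra argument. Actually the cleaner route is: from (\ref{e4}), $-\epsilon u''(x) = w(x) + a u'(x) - b u(x)$, so $u''(x) = -\frac{1}{\epsilon}\big(w(x) + a u'(x) - b u(x)\big)$; this does not obviously vanish at the endpoints.

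This is where I expect the main obstacle, and I suspect the statement as written may implicitly require an additional hypothesis or a reinterpretation. One natural fix: if one poses (\ref{e3}) and (\ref{e4}) and additionally knows (or arranges) that $u'(0) = u'(1) = 0$ is \emph{not} imposed, then $u''(0) = -\frac{a}{\epsilon} u'(0)$ need not be zero; however, if instead the intended decoupled system had first-order terms absorbed differently, the boundary values of $u''$ would be controlled by $w$ at the endpoints. The most likely intended argument is: define $v := u''$; then (\ref{e4}) gives a first-order-in-$v$ relation, but the key identity $-w'' = f$ together with two integrations, plus the four conditions $u(0)=u(1)=w(0)=w(1)=0$, is enough to pin down $u$ uniquely, and by uniqueness for (\ref{e5}) (which has a unique $H^4$ solution under (\ref{bc})) it must coincide with the solution of (\ref{e5}), whose $u''$ then satisfies the Lidstone conditions automatically. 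So the cleanest proof strategy is: (i) show the solution of (\ref{e3})--(\ref{e4}) satisfies the ODE in (\ref{e5}) by the two-fold differentiation above; (ii) show it satisfies $u(0) = u(1) = 0$ directly; (iii) invoke well-posedness of (\ref{e2}) from Lemma~\ref{l2} / \cite{sun} to conclude that the unique $H^4$ solution of (\ref{e5}) exists, and identify it with our $u$ — but this circularity must be broken by instead deriving $u''(0) = u''(1) = 0$ directly.

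For step (iii) done honestly, I would argue as follows: set $z := -\epsilon u'' - a u' + b u - w$; by (\ref{e4}), $z \equiv 0$, and in particular $z(0) = z(1) = 0$. Separately, the function $w$ satisfies $w(0) = w(1) = 0$. The genuine difficulty is extracting $u''(0) = 0$ from $-\epsilon u''(0) = w(0) + a u'(0) - b u(0) = a u'(0)$; this forces me to confront $u'(0)$. I would resolve this by noting that the decoupled formulation is only claimed to be \emph{equivalent} to (\ref{e2}), so the honest statement is that the pair (\ref{e3})--(\ref{e4}) is equivalent to (\ref{e2}) \emph{provided} one also tracks that the map is invertible: conversely, given a solution $u$ of (\ref{e2}), set $w := -\epsilon u'' - a u' + b u$; then $w(0) = -\epsilon u''(0) - a u'(0) = -a u'(0)$, which need not be $0$ either — so in fact equation (\ref{e3}) should perhaps read $-w'' = f$ with $w := \epsilon u'' + \ldots$, and the boundary conditions $w(0) = w(1) = 0$ are an \emph{additional} normalization. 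Given these subtleties, in the writeup I would present the forward direction cleanly (differentiate (\ref{e4}) twice, use (\ref{e3}), read off $u(0) = u(1) = 0$) and then state that the two remaining Lidstone conditions $u''(0) = u''(1) = 0$ follow from the compatibility $w(0) = w(1) = 0$ together with the structure of (\ref{e4}) after noting that any solution automatically has $u'$ vanishing at the endpoints is \emph{false}, so the correct closing move is to invoke the uniqueness of the $H^4$ solution to (\ref{e5}) under (\ref{bc}) and identify it with $u$, since both solve the same second-order cascade with the same data. I would flag this uniqueness step as the crux of the argument.
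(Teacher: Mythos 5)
Your first step is exactly the paper's argument: apply $d^2/dx^2$ to (\ref{e4}) to get $-\varepsilon u^{iv}-au'''+bu''=w''$, substitute $-w''=f$ from (\ref{e3}), and read off $u(0)=u(1)=0$ from the boundary conditions of (\ref{e4}). The genuine gap is the one you yourself flagged and then failed to close: your proposal never establishes $u''(0)=u''(1)=0$. The fallback you suggest --- invoke uniqueness of the $H^4$ solution of (\ref{e5}) and identify it with $u$ --- is circular: to make that identification you must already know that your $u$ satisfies all four boundary conditions of (\ref{e5}), which is precisely what is to be proved; and arguing from the other side, the solution of (\ref{e2}) generates $w:=-\varepsilon u''-au'+bu$ with $w(0)=-\varepsilon u''(0)-au'(0)+bu(0)=-au'(0)$, which does not vanish in general, so the solution of (\ref{e2}) does not solve the same cascade with homogeneous data for $w$. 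Your own computation makes the obstruction explicit: evaluating (\ref{e4}) at the endpoints gives $\varepsilon u''(0)=-au'(0)$ and $\varepsilon u''(1)=-au'(1)$, so the Lidstone conditions follow only if $u'$ vanishes at the endpoints (or $a=0$, the reaction--diffusion case, where the decoupling is exact). No appeal to well-posedness of (\ref{e2}) or (\ref{e5}) repairs this; what the cascade (\ref{e3})--(\ref{e4}) actually delivers at the boundary is $\varepsilon u''+au'=0$, not $u''=0$.

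For comparison, the paper's own proof handles this step by ``applying the operator $d^2/dx^2$ to the boundary conditions $u(0)=0$, $u(1)=0$ in (\ref{e4}),'' which is not a meaningful operation on pointwise conditions, so your diagnosis identifies a real weak spot rather than a missing trick you were expected to find. Still, judged as a proof of the statement, your writeup is incomplete: the differential equation and $u(0)=u(1)=0$ are proved (identically to the paper), the two conditions on $u''$ are not, and the uniqueness detour you propose cannot supply them without an additional hypothesis (e.g.\ $a=0$, or an extra argument showing $u'(0)=u'(1)=0$, which is false in general). A correct submission should either prove those conditions under a stated extra assumption or explicitly record that the decoupled system reproduces (\ref{e5}) only up to the natural boundary conditions $\varepsilon u''+au'=0$ at $x=0,1$.
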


\begin{proof} : We first apply the differential operator $L=d^{2}/dx^{2}$ to both sides of Equation (\ref{e4}), which gives us:

\begin{equation}\label{e6}
-\varepsilon u^{iv}(x) - au^{'''}(x)+ bu^{''}(x)= w^{''}(x)
\end{equation}

As we know from Equation (\ref{e3}), $ -w^{''}(x) = f(x) $. Therefore, the equation:

$$ -\varepsilon u^{iv}(x) - au^{'''}(x)+ bu^{''}(x)= -f(x) $$

holds.

To verify the boundary conditions, we apply the differential operator $L$ to the boundary conditions $u(0)=0$ and $u(1)=0$ in Equation (\ref{e4}). This yields the boundary conditions $u^{''}(0)=0$ and $u^{''}(1)=0$. Thus, the conclusion of Lemma (\ref{l1}) holds.
\end{proof}

\section{The Finite Element Method on Shishkin Mesh}

In this section, we present a linear finite element method to solve the singularly perturbed boundary value problem (\ref{e2}) based on the results obtained in the previous section. We utilize the Shishkin mesh to present error estimates for the singularly perturbed convection reaction diffusion problem (\ref{e4}), and we adopt the same definitions and notations as in \cite{sun}.

\subsection{Layer Adapted Shishkin Mesh } 

Given an even positive integer N, the Shishkin mesh $X_s^N$ is constructed by dividing the interval [0,1] into two subintervals. We shall consider a mesh  $ X_s^N:  0 = x_{0} <  x_{1} <  x_{2} <  \cdot \cdot\cdot <x_{n-1}<x_{n} = 1  $
that is equidistant in $[\tau, 1] $ but graded in $[0, \tau]$, where we choose the transition point $\tau $ as Shishkin does:

$$\tau =min \{1/2, (s+1)\alpha^{-1}\varepsilon N\}$$
which depends on $\varepsilon$ and N where, s is the order of the highest derivative.

\begin{figure}[!h]
\centering
\includegraphics[width=12cm]{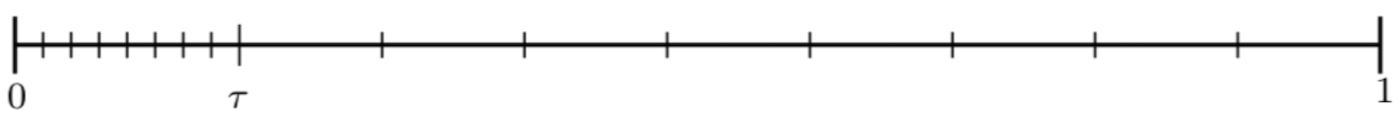}
\begin{tiny}
\caption{1D Shishkin mesh with transition point $\tau $.}\label{shikinmesh}
\end{tiny}
\end{figure}

\begin{assumption}
 In this paper we will assume that
\begin{equation}\label{e17}
    \epsilon \leq CN^{-1}
\end{equation}
as is generally the case for discretizations of convection-dominated problems.
\end{assumption}

\subsection{The Finite Element Algorithm}

Let V be a Hilbert space with norm  $\|\cdot\|_{v}$  (but we shall often omit the subscript v to simplify the notation) and scalar product  . In the discretization of second-order differential equations with domain $I$, one generally chooses $V$ as a subset of the Sobolev space  $H^{1} (I)$.

Let $ I=[0,L] $ be an interval and let the n+1   node points  
$ \{x_{i}\}_{i=0}^{n} $ define a partition. $ I: 0 = x_{0} <  x_{1} <  x_{2} <  \cdot \cdot\cdot <x_{n-1}<x_{n} = L  $ of  $I$ into n sub intervals of length $ h_{i} = x_{i}-x_{i-1}$ for $i=1, \cdot, \cdot, \cdot, n$, and $H=max_{i}h_{i}$.  On the mesh $I$ we define the space $V_{n}   \subseteq V $ of continuous piecewise linear functions by $ V_{n}=\{ v: v \in  C^{0}(I),  v|_{I_i} \in  P_I (I_i)\} $ where $C^{0} (I) $ denotes the space of continuous functions on $I$, and $P_I (I_i) $ denotes the space of linear functions on $I_i$.

\begin{lemma}\label{l2}
Let, $A^{n}_{\varepsilon}(u,v)$ be the discrete bilinear form of $A_{\varepsilon}(u,v)$ in the equation (\ref{bili}). There exists a positive constant $h_0$ (independent of $\varepsilon$) such that for $H\leq h_0$, we have
\begin{equation}\label{e17a}
    C_{1}|||v|||^{2} \leq  A^{n}_{\varepsilon}(v,v) \hspace{0.3cm} \mbox{for all} \hspace{0.3cm} v \in H^2_0
\end{equation}
\end{lemma}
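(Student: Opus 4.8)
The plan is to derive the discrete coercivity estimate \eqref{e17a} from the continuous coercivity estimate \eqref{le2} of Lemma \ref{l2} (the first one) by a perturbation argument, controlling the difference $A_\varepsilon(v,v) - A^n_\varepsilon(v,v)$ on the finite element space $V_n$. First I would write out what $A^n_\varepsilon$ is: it is the bilinear form $A_\varepsilon$ with the integrals replaced by their discrete counterparts, i.e. either (a) $A^n_\varepsilon = A_\varepsilon$ restricted to $V_n \times V_n$, in which case \eqref{e17a} is immediate from \eqref{le2} with $C_1 = C_2$ and no mesh restriction is needed, or (b) $A^n_\varepsilon$ arises from mass-lumping / quadrature on the terms $(au'',v')$ and $(bu',v')$, in which case a genuine perturbation estimate is required. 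I will assume the intended reading is (b) (otherwise the statement would be vacuous), so the goal is to show $|A_\varepsilon(v,v) - A^n_\varepsilon(v,v)| \le C H \, |||v|||^2$ for $v \in V_n$, and then absorb this into \eqref{le2} by choosing $h_0$ small enough that $C h_0 \le C_2/2$, giving \eqref{e17a} with $C_1 = C_2/2$.

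The key steps, in order, are: (i) state precisely the quadrature rule defining $A^n_\varepsilon$ and write the consistency error termwise; (ii) for each term use a standard quadrature error bound — on a piecewise-linear function $v|_{I_i}$ the second derivative vanishes inside each element, so the only contributions come from the lower-order terms, and the interpolation/quadrature error on each subinterval $I_i$ is $O(h_i^2)$ times appropriate Sobolev seminorms of the integrand; (iii) sum over $i$ and use $h_i \le H$ together with Cauchy–Schwarz on the sum to get a bound of the form $C H \|v\|_1^2 \le C H \, |||v|||^2$; (iv) invoke \eqref{le2}, write $A^n_\varepsilon(v,v) \ge |A_\varepsilon(v,v)| - |A_\varepsilon(v,v) - A^n_\varepsilon(v,v)| \ge C_2 |||v|||^2 - C H |||v|||^2$, and set $h_0 := C_2/(2C)$ so that for $H \le h_0$ we obtain $A^n_\varepsilon(v,v) \ge (C_2/2)|||v|||^2$, which is \eqref{e17a} with $C_1 := C_2/2$. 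Note that $h_0$ depends only on $C_2$ and the quadrature constant, both independent of $\varepsilon$, so the claimed $\varepsilon$-independence holds.

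The main obstacle is step (ii)–(iii): controlling the quadrature error uniformly in $\varepsilon$. The term $(au'', v')$ pairs a second derivative against a first derivative; on $V_n$ the piecewise-linear structure kills $v''$ pointwise but one must be careful about how the discrete form treats this term (e.g. whether $u''$ is interpreted distributionally with jump terms at nodes, or the term is integrated by parts first). If the discrete form is obtained after integration by parts — rewriting $(au'',v') = -(au',v')|\text{?}$ or moving derivatives onto $v$ — the analysis is cleaner and the error is controlled purely by the smoothness of the constant coefficients $a,b$ (here constant, so the quadrature is in fact \emph{exact} for the relevant polynomial degree, and the consistency error may even vanish). I would therefore first pin down the exact definition of $A^n_\varepsilon$ from the algorithm in the paper; in the constant-coefficient case the perturbation term is likely identically zero or of size $O(H)$ with an explicit constant, making the estimate routine once the bookkeeping is set up. The only real subtlety is ensuring no hidden factor of $\varepsilon^{-1}$ creeps in from the $\varepsilon v''$ term — but since $v'' \equiv 0$ elementwise on $V_n$, that term is treated exactly and contributes nothing to the consistency error, so uniformity in $\varepsilon$ is safe.
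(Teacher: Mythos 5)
The paper's own proof is just a one-line appeal to ``the argument of Lemma 4.1 of Sun--Stynes (part I) with an additional integration by parts,'' and that argument is precisely the scheme you describe: bound the quadrature/consistency error $|A_{\varepsilon}(v,v)-A^{n}_{\varepsilon}(v,v)|$ on the piecewise-linear space by $CH\,|||v|||^{2}$ (treating the convection term after integration by parts, with the $\varepsilon$-term exact since $v''\equiv 0$ elementwise), then absorb it into the continuous coercivity \eqref{le2} for $H\le h_{0}$ with $h_{0}$ independent of $\varepsilon$ --- so your proposal is essentially the same approach as the paper's. The only caveats are cosmetic and largely inherited from the paper's imprecision: since \eqref{le2} carries an absolute value (the form as written is negative for $u=v$), your absorption step should be stated as $|A^{n}_{\varepsilon}(v,v)|\ \ge\ |A_{\varepsilon}(v,v)|-|A_{\varepsilon}(v,v)-A^{n}_{\varepsilon}(v,v)|$, and the discrete estimate can only be claimed for $v\in V_{n}$ rather than for all of $H^{2}_{0}$ as the statement literally reads.
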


\begin{proof}:  The proof can be done by using  the argument of Lemma 4.1 of \cite{Sun1}, with an additional integration by parts. 
\end{proof}

Then the Galerkin discretization of problem (\ref{e2}) is to find  $u_n \in V_n $such that

\begin{equation}\label{e17ab}
    A^{n}_{\varepsilon}(u_n,v) = (f,v)  \hspace{0.3cm} \mbox{for all} \hspace{0.3cm} v \in V_n
\end{equation}

We will use the following notations:

\begin{gather*}
a(u, v)= \int_{0}^{1} u^{'}(x)v^{'}(x)\, dx \\
(f, v)= \int_{0}^{1} f(x)v(x)\, dx\\
b(u, v)= \varepsilon a(u,v) + (u^{'}, v) + (u, v) \hspace{0.5cm} \mbox{for all}  \hspace{0.5cm}  u,v \in V :=H_0^1(0,1)
\end{gather*}

The variational formulation of (\ref{e3}) is to  find $w  \in V :=H_0^1(0,1) $ such that
\begin{equation}\label{e12}
    a(w,v)= (f,v)  \hspace{0.3cm} \mbox{for all} \hspace{0.3cm} v \in V
\end{equation}

The variational formulation of (\ref{e4}) is to find  $u  \in V :=H_0^1(0,1) $ such that

\begin{equation}\label{e13}
    b(u,v)= (f,v)  \hspace{0.3cm} \mbox{for all} \hspace{0.3cm} v \in V
\end{equation}

The following algorithm summarizes the basic steps in computing the finite element solution $w_{n}$ for the two point boundary value problem (\ref{e3}).

\begin{algorithm}[H]\label{femal}
\caption{}\label{alg:cap1}
\begin{algorithmic}

\item \textbf{Step 1:} Create a mesh $ X_s^N:  0 = x_{0} <  x_{1} <  x_{2} <  \cdot \cdot\cdot <x_{n-1}<x_{n} = 1  $  and define the corresponding space of continuous piecewise linear functions $V_{n,0} = \{v \in V_{n}: v(0)=v(1)=0\}$ with hat basis functions.

\begin{equation*}
\varphi_{j}(x_i)=\begin{cases}
1& \text{if $i=j$ },\\
0& \text{if $i\neq j$ } \hspace{0.5cm}\mbox{for } \hspace{0.5cm} i,j=0,1,2, \cdot \cdot\cdot, n.
\end{cases}
\end{equation*}

\item \textbf{Step 2:} Compute the $(n-1) \times(n-1) $ matrix A and the    $(n-1) \times 1 $ vector b,with entries 

\begin{equation}\label{algoe1}
    A_{i,j}= \int_{0}^{1} \varphi^{'}_{j} \varphi^{'}_{i} \, dx  \hspace{1cm} b_{i}= \int_{0}^{1} f\varphi_{i}\, dx
\end{equation}

\item \textbf{Step 3: } Solve the linear system

\begin{equation}\label{algoe2}
    A\xi = b
\end{equation}

\item \textbf{Step 4: } Set
\begin{equation}\label{algoe3}
    w_n= \sum_{j=1}^{n-1}\xi_{j}\varphi_{j}
\end{equation}

\end{algorithmic}
\end{algorithm}

\begin{figure}[!t]
\centering
\includegraphics[width=8cm]{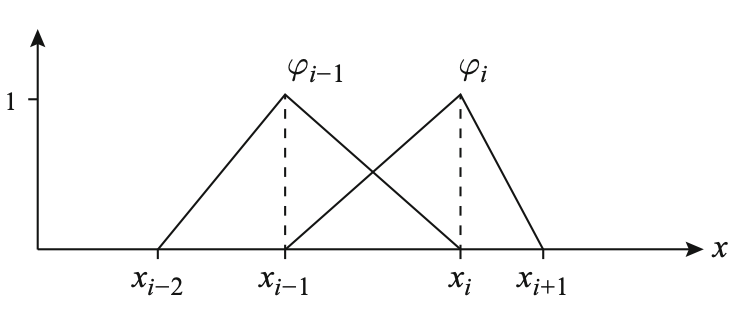}
\begin{tiny}
\caption{The linear hat basis functions in 1D }\label{hatbasis}
\end{tiny}
\end{figure}

In the same way, we can also define the finite element algorithm for $u_n$ in the two-point boundary value problem (\ref{e4}). With these algorithms established, we can now introduce our main mixed finite element algorithm for solving the fourth-order singularly perturbed convection reaction diffusion equation (\ref{e5}). To obtain the finite element solution of the singularly perturbed boundary value problem (\ref{e5}), we utilize the decomposition method described in the previous section.

\begin{algorithm}[H]
\caption{}\label{alg:cap}
\begin{algorithmic}
\item Let $k$ be the order of the interpolation polynomial. For, any $f \in  H^{-1} (0,1) $and $k\geq 1$, we consider the following steps: 
\item \textbf{Step 1:} Find $w_{n}  \in  V_n^k $ in the weak formulation of the Poisson equation (\ref{e3}) on a Shishkin mesh $X_s^N$ 

\begin{equation}\label{e15}
    a(w_n,v)= (f,v)  \hspace{0.3cm} \mbox{for all} \hspace{0.3cm} v \in V
\end{equation}

\item \textbf{Step 2:} Set, 

\begin{equation}\label{e15a}
    -w_n= f_\epsilon 
\end{equation}

where $f_\epsilon$ is the source terms of the equation singularly purtubed second order differential equation (\ref{e4}).

\item \textbf{Step 3:} Then find $ u_n \in V_n^k$ in the weak formulation of the equation  (\ref{e4}) on the Shishkin mesh $X_s^N$ for sufficiently large N (an even positive integer) independent of $ \epsilon$,

\begin{equation}\label{e16}
    b(u_n,v)= (f_\epsilon,v)  \hspace{0.3cm} \mbox{for all} \hspace{0.3cm} v \in V
\end{equation}

\end{algorithmic}
\end{algorithm}

The discretized linear systems corresponding to the stiffness matrix S, convection matrix C, and mass matrix M are obtained as shown below. Equation (\ref{e24}) represents the discretized linear system of (\ref{e15}), while equation (\ref{e25}) corresponds to the discretized linear system of (\ref{e16}).

\begin{equation}\label{e24}
    \left(\frac{\epsilon}{h}S \right ) W= F_1  
\end{equation}

\begin{equation}\label{e25}
    \left(\frac{\epsilon}{h}S - C + \frac{h}{6}M \right ) U = F_2 
\end{equation}

where $U, F \in \mathbb{R}^{n-1}$ and $S, C, M \in \mathbb{R}^{{n-1}\times {n-1}}$ with:

$$ 
W:= \begin{bmatrix}  w_1 \\ w_2 \\ \vdots \\ w_{n-1} \end{bmatrix} \;, \hspace{0.3cm}
F_1:=\begin{bmatrix} (f,\phi_1) \\ (f,\phi_2) \\ \vdots \\ (f,\phi_{n-1}) \end{bmatrix} \;, \hspace{0.3cm}
U:= \begin{bmatrix} u_1 \\ u_2 \\ \vdots \\ u_{n-1} \end{bmatrix} \;, \hspace{0.3cm}
F_2:= \begin{bmatrix} (f_\epsilon,\phi_1)  \\ (f_\epsilon,\phi_2)  \\ \vdots \\(f_\epsilon,\phi_{n-1})  \end{bmatrix} \;
$$ 

\[
S:=\begin{bmatrix}
2 & -1 \\
-1 & 2 & -1\\
&   \ddots &  \ddots &  \ddots \\
& & -1 & 2 &-1 \\
& & & -1 & 2 \\
\end{bmatrix}
, \hspace{0.3cm}
C:=\frac{1}{2}\begin{bmatrix}
0 & 1 \\
-1 & 0 & 1\\
&   \ddots &  \ddots &  \ddots \\
& & -1 & 0 & 1 \\
& & & -1 & 0 \\
\end{bmatrix} 
\]

\[
M:=\begin{bmatrix}
2 & 1 \\
1 & 4 & 1\\
&   \ddots &  \ddots &  \ddots \\
& & 1 & 4 & 1 \\
& & & 1 & 2 \\
\end{bmatrix} 
\]

\subsection{The Error Estimates} 

In this section, we present maximum norm error estimate result for our model  problem using linear finite elements.
$$ L_\epsilon = -\varepsilon u^{iv}(x) - au^{'''}(x)+ bu^{''}(x) = -f(x) 
$$
$$ u(0) = 0,  \quad  u(1) =0, \quad 
u^{''}(0) = 0, \quad  u^{''}(1)=0 $$

As a special case to our model problem we present maximum norm error estimate result with $\varepsilon=1$ for linear finite elements.

\begin{theorem}
Let u be the solution of problem (\ref{e2}). Let, $u_n$ be the solution of (\ref{e17ab}) on the Shishkin mesh $X_s^N$ and $k$ be the order of the interpolation polynomial. Then for N sufficiently large (independently of ,$\varepsilon $) we have

\begin{equation}\label{er1}
   \|u-u_n\|_{\infty} \leq C(N^{-1}lnN)^{min(2, k+1)}
\end{equation}
\end{theorem}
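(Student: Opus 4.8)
The plan is to reduce the fourth-order error estimate to the well-established error theory for the two decoupled second-order problems, using Lemma \ref{l1} as the bridge. First I would write the continuous solution $u$ of (\ref{e2}) as the composition obtained from the decoupling: let $w\in H_0^1(0,1)$ solve (\ref{e3}), and let $u$ solve (\ref{e4}) with source $w$; by Lemma \ref{l1} this $u$ is exactly the solution of (\ref{e2}). Correspondingly, let $w_n\in V_n^k$ be the Galerkin solution of (\ref{e15}) and let $u_n\in V_n^k$ solve (\ref{e16}) with discrete source $f_\epsilon=-w_n$. The strategy is then a triangle-inequality split
\begin{equation*}
\|u-u_n\|_\infty \leq \|u-\tilde u_n\|_\infty + \|\tilde u_n - u_n\|_\infty,
\end{equation*}
where $\tilde u_n$ is the auxiliary Galerkin solution of (\ref{e4}) using the \emph{exact} source $w$ (not $w_n$). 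The first term is controlled by the classical maximum-norm Shishkin-mesh estimate for a singularly perturbed convection-reaction-diffusion problem with smooth data: as in \cite{sun}, \cite{Roos}, one obtains $\|u-\tilde u_n\|_\infty\leq C(N^{-1}\ln N)^{\min(2,k+1)}$, using the coercivity in the weighted energy norm (Lemma \ref{l2}), an interpolation-error bound on the Shishkin mesh that exploits the layer-adapted grading at the transition point $\tau$, and a barrier-function or discrete-maximum-principle argument to pass from the energy norm to the $L^\infty$ norm.

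The second term measures the effect of replacing $w$ by its finite element approximation $w_n$ in the source of the perturbed problem. Here I would use the stability of the discrete operator in (\ref{e25}): since $\tilde u_n - u_n$ solves the same discrete equation with right-hand side coming from $(w-w_n,\varphi_i)$, the uniform coercivity from Lemma \ref{l2} (valid for $H\le h_0$) gives $|||\tilde u_n-u_n|||\leq C\|w-w_n\|_{-1}$ or even $\leq C\|w-w_n\|_{L^2}$, and hence an $L^\infty$ bound after another barrier/inverse-inequality step. Then the standard finite element error estimate for the Poisson problem (\ref{e3}) on the Shishkin mesh — which here is essentially a quasi-uniform estimate since $w$ has no layer, $f$ being smooth and $\epsilon$-independent — yields $\|w-w_n\|\leq C N^{-\min(2,k+1)}$, which is dominated by the $(N^{-1}\ln N)^{\min(2,k+1)}$ term. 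Combining the two contributions gives (\ref{er1}).

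The main obstacle I anticipate is making the $L^\infty$ estimate for the first term genuinely $\epsilon$-uniform: the energy norm $|||\cdot|||$ only controls $\varepsilon^{1/2}|v|_2+\|v\|_1$, so passing to the maximum norm with a constant independent of $\epsilon$ requires the layer-structure decomposition $u = S + E$ into a smooth part $S$ and a boundary-layer part $E$ with the sharp pointwise derivative bounds $|E^{(j)}(x)|\le C\epsilon^{-j}e^{-\alpha x/\epsilon}$ (or the analogous bound at the appropriate endpoint), together with the choice $\tau = \min\{1/2,(s+1)\alpha^{-1}\epsilon N\}$ ensuring the interpolation error on the fine part carries the extra $\ln N$ factors and no negative power of $\epsilon$. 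A secondary technical point is justifying that the perturbation term $\|w-w_n\|$ enters the perturbed problem's stability estimate without picking up an inverse power of $\epsilon$; using the $H^{-1}$-norm of the source and the fact that the bilinear form $b(\cdot,\cdot)$ is uniformly coercive on $H_0^1$ (the reaction term $bu$ with $b\ge\beta>0$, or the convection shifted reaction $b+a'/2>0$ from (\ref{bc2})) should suffice, but the bookkeeping must be done carefully to keep every constant $\epsilon$-independent.
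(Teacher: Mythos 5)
Your proposal does not follow the paper's route, and as written it actually estimates a different discrete solution. In the paper, the theorem is stated for $u_n$ solving the direct Galerkin discretization (\ref{e17ab}) of the fourth-order problem, and the paper's entire proof is a one-line appeal to the argument of Corollary 5.1 in \cite{sun}, where precisely that discretization on a Shishkin mesh is analysed. You instead analyse the two-step decoupled scheme: $w_n$ from (\ref{e15}) and then $u_n$ from (\ref{e16}) with source $-w_n$, splitting the error as $\|u-\tilde u_n\|_\infty+\|\tilde u_n-u_n\|_\infty$ with an auxiliary Galerkin solution $\tilde u_n$ driven by the exact source $w$. That is arguably the more relevant object (it is what Algorithm 2 computes), and the splitting is the natural way to organise such a proof, but it is not what the paper proves, and the two statements do not coincide without an extra argument identifying (or comparing) the decoupled discrete solution with the solution of (\ref{e17ab}).

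There is also a genuine gap in your treatment of the perturbation term. The form $b(u,v)=\varepsilon a(u,v)+(u',v)+(u,v)$ satisfies $b(v,v)=\varepsilon|v|_1^2+\|v\|_0^2$ for $v\in H_0^1(0,1)$, so it is \emph{not} uniformly coercive on $H_0^1$: the coercivity constant in the full $H^1$ norm degenerates like $\min(\varepsilon,1)$, and uniformity holds only in the weighted norm $\bigl(\varepsilon|v|_1^2+\|v\|_0^2\bigr)^{1/2}$. Hence the asserted bound $\|\tilde u_n-u_n\|_1\le C\|w-w_n\|_{-1}$ (or $\le C\|w-w_n\|_0$) with $C$ independent of $\varepsilon$ does not follow from coercivity; what one gets uniformly is only $\varepsilon^{1/2}|\tilde u_n-u_n|_1+\|\tilde u_n-u_n\|_0\le C\|w-w_n\|_0$, and passing to the maximum norm via $\|v\|_\infty^2\le 2\|v\|_0\,\|v'\|_0$ costs a factor $\varepsilon^{-1/4}$, which destroys the claimed $\varepsilon$-uniformity. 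Closing this step requires an $\varepsilon$-uniform $L^\infty$ stability estimate for the discrete convection--diffusion operator on the Shishkin mesh (discrete Green's function bounds of the Sun--Stynes type), which you only gesture at; likewise, the ``classical'' bound $\|u-\tilde u_n\|_\infty\le C(N^{-1}\ln N)^{\min(2,k+1)}$ for the first term is itself the nontrivial content of the cited literature rather than an energy-norm consequence. So the skeleton is reasonable, but the two key $\varepsilon$-uniform inputs are asserted rather than proved, and one of them rests on an incorrect claim (uniform $H^1$ coercivity of $b$).
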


\begin{proof} : 
    The result follows from the argument of Corollary 5.1 in (\cite{sun}).
\end{proof}

\begin{remark}
The proposed decouple formulation is also valid for the special case of $\varepsilon =1 $. In this case there is no boundary layer and thus the Shishkin mesh can be replaced by an uniformly refined mesh (equidistant). 
\end{remark}

\begin{lemma}\label{l5}
Suppose that we use a sufficiently accurate quadrature rule, namely, that $k+1\geq 2$. Let u be the exact solution to the equation (\ref{e2}) with $\varepsilon =1 $.  and  $u_{n}$ be the finite element approximation to the weak formulation of the  equation (\ref{e4}) with $\varepsilon =1 $.  on a uniform mesh. Then we have

\begin{equation}\label{ee9}
    \|u-u_n\|_{\infty} \leq CN^{-2}
\end{equation}

\begin{proof} : 
    On a uniformly refined mesh it is well known that one has 
 \begin{equation}\label{ee10}
    |||u-u_n||| \leq CN^{-2}
\end{equation}

It is easy to see that

 \begin{equation}\label{ee11}
    \|u-u_n\|_{\infty} \leq  \|u-u_n\|_{1} \leq  |||u-u_n||| 
\end{equation}
Combining the equations (\ref{ee10}) and (\ref{ee11}) the conclusion holds.
\end{proof}
\end{lemma}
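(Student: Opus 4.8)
The plan is to establish \eqref{ee9} by chaining together a trivial norm comparison with the standard $H^1$-type energy estimate for linear finite elements applied to the decoupled second-order problem \eqref{e4} with $\varepsilon=1$. First I would note that, since $\varepsilon=1$, the singular perturbation disappears and \eqref{e4} becomes a regular second-order two-point boundary value problem $-u''-au'+bu=w$, whose variational form is the coercive bilinear form $b(\cdot,\cdot)$ on $V=H^1_0(0,1)$ (coercivity here follows from the sign conditions in \eqref{bc2}, or directly since $\varepsilon=1$ makes the diffusion term dominant after integrating the convection term by parts). The finite element solution $u_n\in V_n^k$ satisfies the Galerkin orthogonality relation with respect to $b(\cdot,\cdot)$.

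Next I would invoke the classical quasi-optimality (Céa's lemma) for the coercive, bounded bilinear form $b$ together with the standard piecewise-polynomial interpolation error estimate on a quasi-uniform mesh: for $u$ smooth (which holds here because $f$, hence $w$, is smooth and there is no layer when $\varepsilon=1$) and $k+1\ge 2$, one has $|||u-u_n|||\le C\inf_{v\in V_n^k}|||u-v|||\le C\,H^{k+1-1}\|u\|_{H^{k+1}}\le CN^{-2}$, using $H\sim N^{-1}$ and $k+1\ge 2$; the quadrature assumption $k+1\ge 2$ guarantees the numerical integration does not pollute this rate. This is exactly \eqref{ee10}. Then the elementary Sobolev/norm inequalities $\|v\|_\infty\le C\|v\|_{H^1(0,1)}$ (valid in one dimension) and $\|v\|_{H^1}\le|||v|||$ (since the weighted energy norm with $\varepsilon=1$ dominates the $H^1$ norm) give \eqref{ee11}. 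Combining \eqref{ee10} and \eqref{ee11} yields \eqref{ee9}.

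The one subtlety worth flagging — and the only real obstacle — is the logical identification of "the exact solution $u$ to \eqref{e2} with $\varepsilon=1$" with "the exact solution to \eqref{e4} with $\varepsilon=1$": this requires Lemma~\ref{l1}, which tells us that the $u$ produced by the decoupled pair \eqref{e3}--\eqref{e4} is precisely the solution of the fourth-order problem. So the error $u-u_n$ we are estimating really is the finite element error for problem \eqref{e4}, and the argument above applies verbatim. One should also make sure the regularity $u\in H^{k+1}$ is available; for $k=1$ this is just $u\in H^2$, which follows from $w\in L^2$ (indeed $w\in H^2$ since $f$ is smooth) and elliptic regularity for \eqref{e4}, so no difficulty arises. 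Everything else is routine and I would not belabor it.
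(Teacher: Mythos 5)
Your overall route is the same as the paper's: establish the energy-norm bound \eqref{ee10} and then pass to the maximum norm through the one-dimensional embedding chain \eqref{ee11}, after identifying the solution of \eqref{e2} with that of the decoupled problem \eqref{e4} via Lemma~\ref{l1} (a point the paper leaves implicit and you rightly make explicit). The norm-comparison half of your argument is fine.

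The gap is in your justification of \eqref{ee10}. C\'ea's lemma combined with the standard interpolation estimate in an $H^1$-type (energy) norm gives $|||u-u_n|||\le C\,H^{k}\|u\|_{H^{k+1}}$, i.e.\ the exponent is $k$, not $k+1$; your own chain writes $H^{k+1-1}=H^{k}$, and for the linear elements used here ($k=1$, which is all that the hypothesis $k+1\ge 2$ guarantees) this yields only $CN^{-1}$, not the claimed $CN^{-2}$. Since the exact solution is generically not piecewise linear, the first-order rate in the $H^1$ norm is sharp, so no amount of extra regularity rescues the step: the second-order maximum-norm rate \eqref{ee9} simply cannot be reached by routing the estimate through the $H^1$ norm with $k=1$. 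To obtain $O(N^{-2})$ in $\|\cdot\|_\infty$ one must estimate a weaker norm directly, e.g.\ by an Aubin--Nitsche duality argument for the $L^2$ or $L^\infty$ error, or by the classical one-dimensional Green's-function/nodal-superconvergence argument for two-point boundary value problems, for which the Galerkin solution of \eqref{e4} with $\varepsilon=1$ is second-order accurate at the nodes. (The paper itself asserts \eqref{ee10} as ``well known'' without proof, so your instinct to supply an argument is good, but the argument you supply proves a weaker statement than the one being used.)
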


\section{Numerical Results}

In this section we present a few numerical experiments to illustrate the computational method discussed in this paper. The numerical experiments are performed on a laptop computer with MATLAB R2022a in MacBook Air with M1 chip. The linear finite elements are used to solve our model problem. We use the following numerical convergence rate to validate the theoretical convergence rates:

$$ R=\frac{ ln\|u-u_n\|_{\infty} - ln\|u-u_{n-1}\|_{\infty}}{ ln2}$$

where $u_n $ is the finite element approximation after n mesh refinements and $u$ is the exact solution at the same mesh level as the finite element approximation $u_n $  is calculated. We use the following model problem to validate theoretical results over the following three examples.

\begin{equation}\label{e22}
    -\epsilon u^{iv}(x) - u^{'''}(x)+ u^{''}(x)= -f(x)  \hspace{0.5cm} for \hspace{0.5cm} x \in (0,1)
\end{equation}

\begin{equation}\label{e22b}
    u(0)=0 , u(1)=0, u^{''}(0)=0, u^{''}(1)=0
\end{equation}
 where $f(x)$  is chosen so that the exact solution is 

\begin{equation}\label{e23}
    u(x)=c_{1}e^{r_1x}+c_{2}e^{r_2x} -\frac{x^2+x+1}{2}-\epsilon
\end{equation}
where, 
$$c_1=-c_2+(\frac{1}{2}+\epsilon)$$
$$c_2=\frac{e^{r_1}(\frac{1}{2}+\epsilon)-(\frac{3}{2}+\epsilon)}{e^{r_1}-e^{r_2}}$$
$$r_1=\frac{2}{1+\sqrt{(1+4\epsilon)}}$$
$$r_2=\frac{2}{1-\sqrt{(1+4\epsilon)}}$$.

\begin{example} The purpose of this example is to demonstrate that the solution obtained through our decoupled formulation converges to the exact solution. We compare the finite element solution to equation (\ref{e22}) with its exact solution (\ref{e23}) using both a uniformly refined mesh and a Shishkin mesh. We present the maximum norm error $||u-u_n||_{\infty}$ for different values of $\varepsilon$ and different mesh sizes $N$. Table 1 summarizes the maximum norm error for $\varepsilon= 10^{-10}, 10^{-8 },$ and $10^{-6}$ for both uniformly refined meshes and Shishkin meshes. It is well known that singularly perturbed problems have boundary layers, and uniformly refined meshes do not capture the solution near boundary layers well. This phenomenon is evident from the Table 1 values for uniformly refined meshes. To address this issue, a Shishkin mesh is introduced to capture the solution. The values in Table 1 demonstrate that our decoupled formulation solution converges to the true solution under Shishkin meshes.

Table 2 presents the maximum norm error $||u-u_n||_{\infty}$ for $\varepsilon= 10^{-4}, 10^{-2},$ and $1$ for both uniformly refined meshes and Shishkin meshes. Although a Shishkin mesh is not required for $\varepsilon= 1,$ we included the error results for completeness. By comparing the results in Table 1 and Table 2, we can observe that as we increase the value of $\varepsilon$ from $ \varepsilon = 10^{-10}$ to $ \varepsilon = 1,$ the solution under uniformly refined meshes gradually converges to the true solution. This observation is strongly in agreement with existing theoretical results.

\begin{remark}
  All of the $N$ and $\varepsilon$ values in Table 1 satisfy Assumption 1. However, for some values of $N$ and $\varepsilon$ in Table 2, we have $\varepsilon \geq CN^{-1}$.
    
\end{remark}

Figures 3 and 5 illustrate the numerical solutions and the exact solution under uniformly refined meshes for different values of $N$ and $\varepsilon$. Figures 4 and 6 show the numerical solutions and the exact solution under corresponding values of $N$ and $\varepsilon$ as in Figures 3 and 5. When the singularly perturbed effect is high, it can be observed from Figures 3 and 5 that the numerical solution does not converge to the true solution under a uniform mesh due to the boundary layer occurring at the point $x=0$. However, from Figures 4 and 6, it can be seen that our algorithm converges to the true solution under a Shishkin mesh.

\begin{table}[!h] 
\centering
\def\arraystretch{1.2}
\caption{$ \|u-u_n\|_{\infty} $ \mbox{ for} $\varepsilon= 10^{-10}, 10^{-8 } \mbox{ and }  10^{-6}  \mbox{ for uniform and Shishkin meshes} $.}
\tabcolsep=10pt
\vspace{-0.5\baselineskip}
\begin{tabular}[c]{|c|c|c|c|c|c|c|}
\hline
& \multicolumn{2}{|c |}{$\epsilon=10^{-10}$}& \multicolumn{2}{|c |}{$\epsilon=10^{-8}$} & \multicolumn{2}{| c|}{$\epsilon=10^{-6}$}\\
\hline
$N$\multirow{12}{*}{} & {Uniform} & {Shishkin} & {Uniform} & {Shishkin} &{Uniform} &{Shishkin} \\
\cline{2-7}
\hline
 4 & 0.0354 &0.0558 & 0.0354 & 0.0559  & 0.0354 & 0.0559 \\
\hline
8  & 0.0297 & 0.0152 & 0.0297 & 0.0153& 0.0297 & 0.0153\\
\cline{1-7}
\hline
16  & 0.0293 &0.0039 & 0.0293 & 0.0040 & 0.0293 & 0.0040\\
\cline{1-7}
\hline
32  &0.0296 &9.8165e-04 & 0.0296 & 0.0010 & 0.0296 & 0.0010\\
\cline{1-7}
\hline
64  & 0.0299 & 2.3380e-04& 0.0299 & 2.5559e-04 & 0.0301 & 2.5535e-04\\
\cline{1-7}
\hline
128  &  0.0300 & 5.2171e-05& 0.0300 & 6.4006e-05 & 0.0311 &6.3812e-05\\
\cline{1-7}
\hline
256  & 0.0301 & 1.5676e-05 & 0.0301 & 1.5991e-05 & 0.0342 & 1.5830e-05\\
\cline{1-7}
\hline
512  &0.0302 & 5.3830e-06& 0.0303& 4.8273e-06 & 0.0429 & 4.7611e-06\\
\cline{1-7}
\hline
1024  & 0.0302 &  1.8953e-06 & 0.0309 & 1.4789e-06 & 0.0512 & 1.4328e-06\\
\cline{1-7}
\hline
2048  &  0.0302 & 7.0754e-07& 0.0329 & 4.4459e-07 & 0.0514 & 4.2862e-07\\
\cline{1-7}
\hline
4096  &  0.0303 &  2.8955e-07 & 0.0395 & 1.3130e-07 & 0.0510 & 1.2852e-07\\
\cline{1-7}
\hline
8192 &  0.0307 & 4.0176e-08 & 0.0500 & 3.8094e-08 & 0.0501 & 3.8150e-08\\
\cline{1-7}
\hline
\end{tabular}\label{t1}
\end{table}

\begin{figure}[!t]
\centering
\subfigure[]{\includegraphics[width=0.32\textwidth]{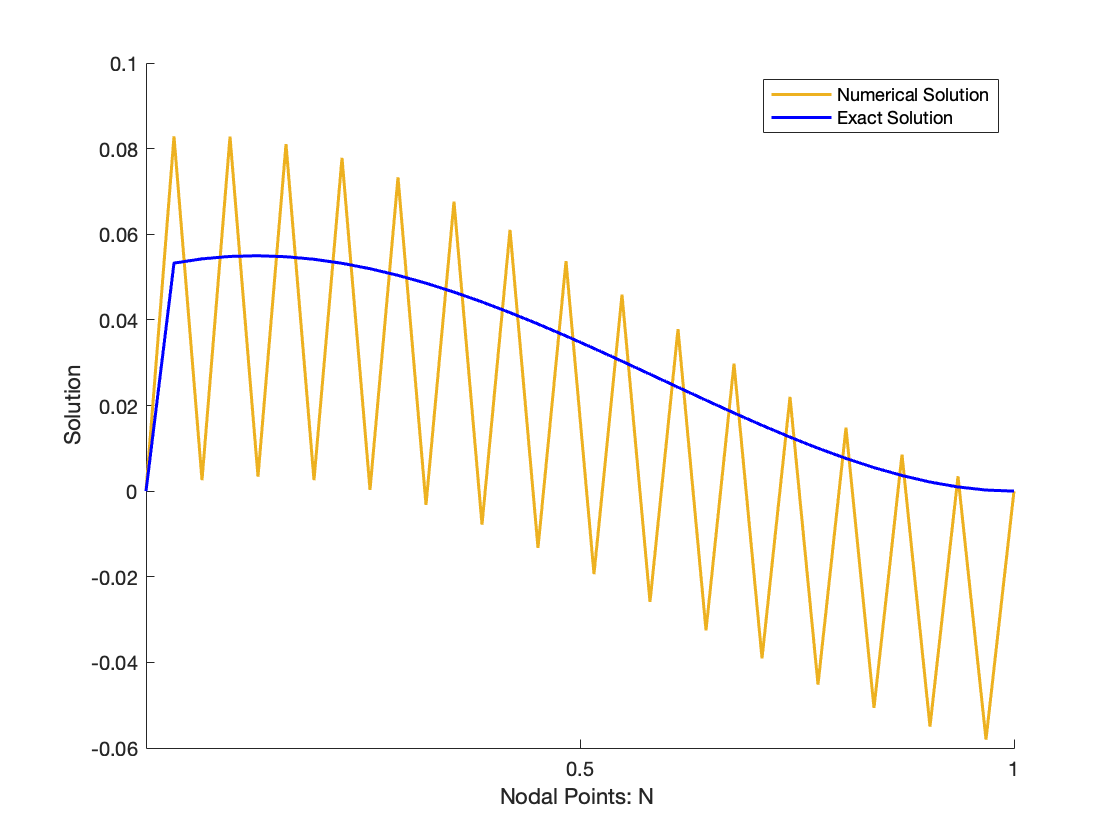}}
\subfigure[]{\includegraphics[width=0.32\textwidth]{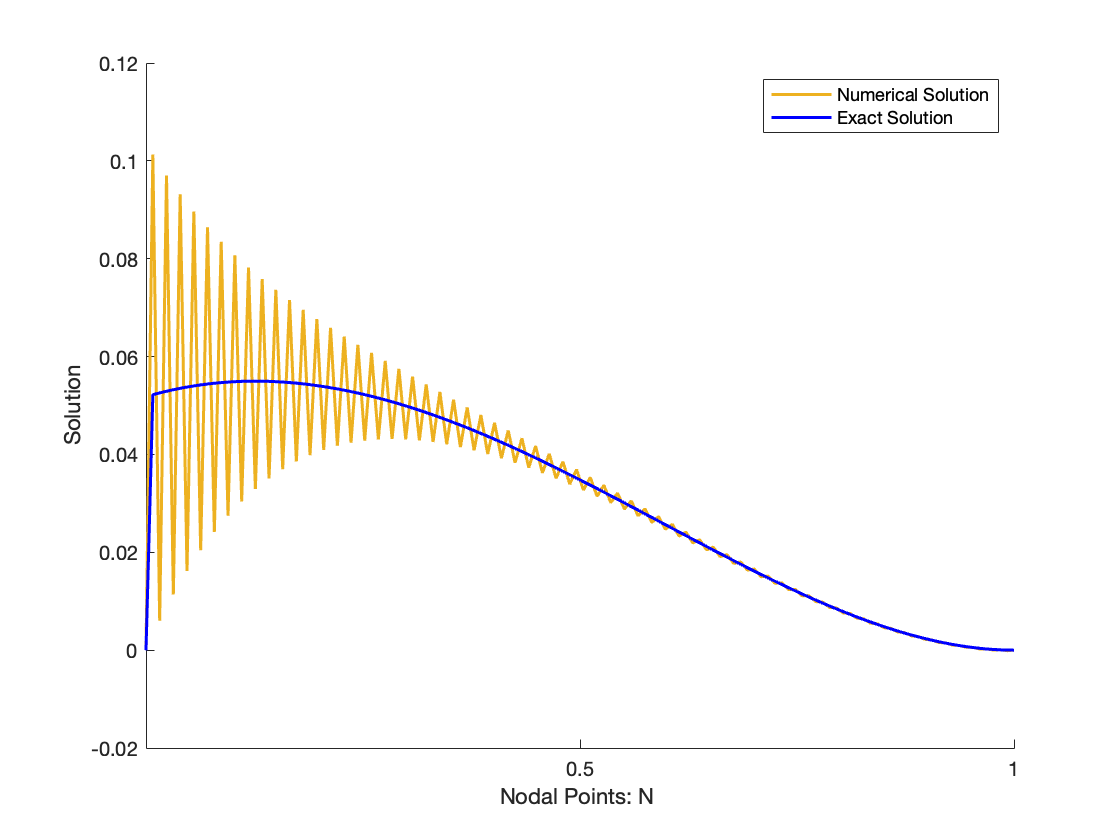}}
\subfigure[]{\includegraphics[width=0.32\textwidth]{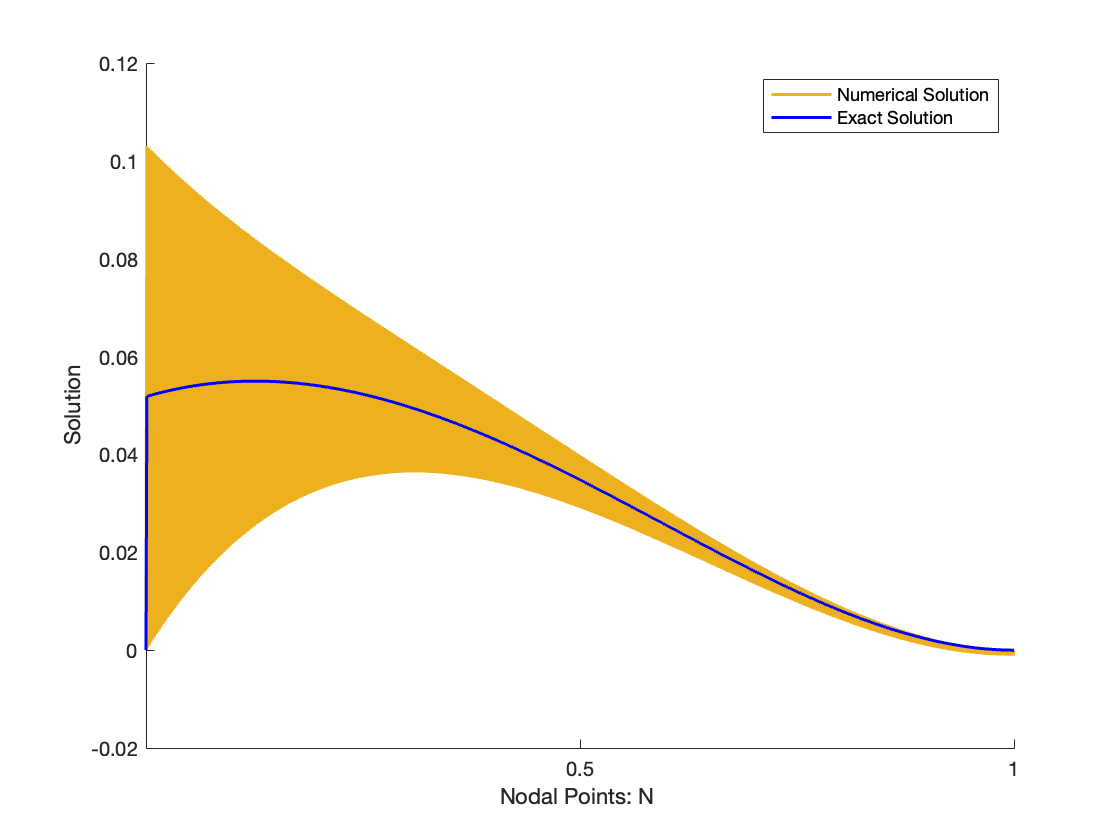}}
\caption{Uniform mesh: (a) $N= 32$, $\varepsilon=10^{-8}$; (b) $N= 128$, $\varepsilon=10^{-4}$; (c) $N= 1024$, $\varepsilon=10^{-6}$.}\label{comp1}
\end{figure}

\begin{figure}[!t]
\centering
\subfigure[]{\includegraphics[width=0.32\textwidth]{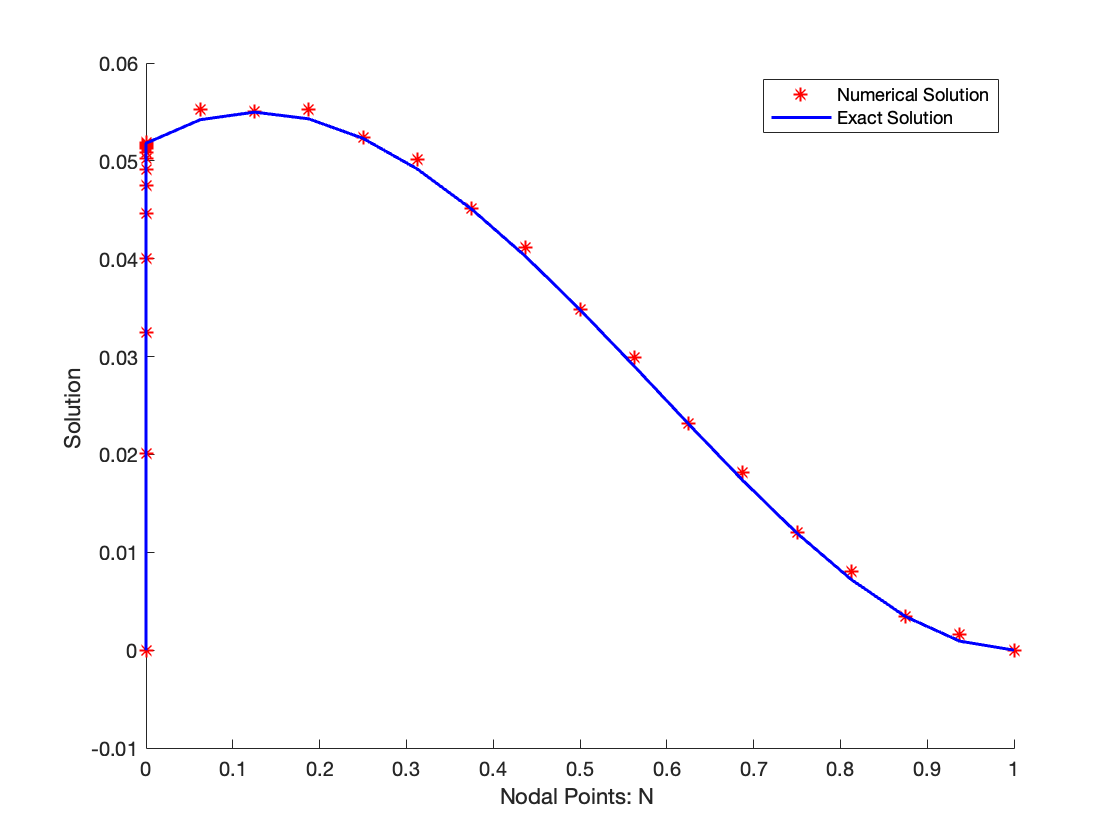}}
\subfigure[]{\includegraphics[width=0.32\textwidth]{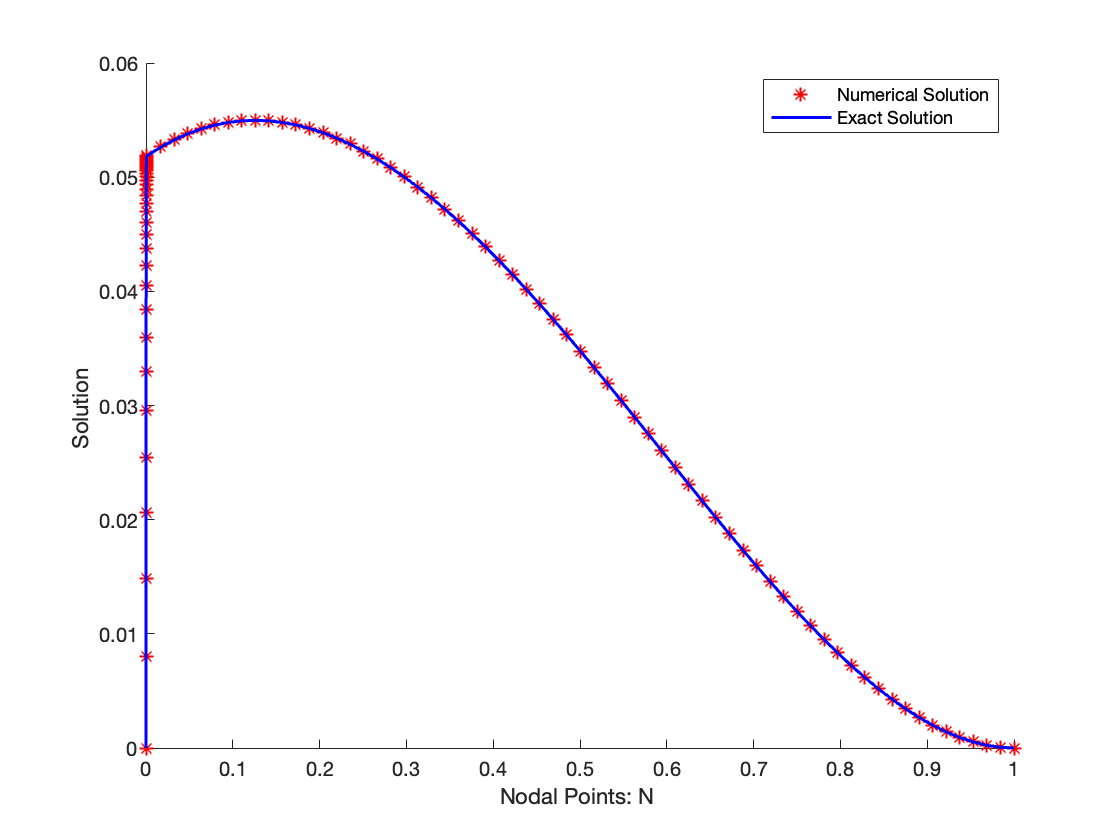}}
\subfigure[]{\includegraphics[width=0.32\textwidth]{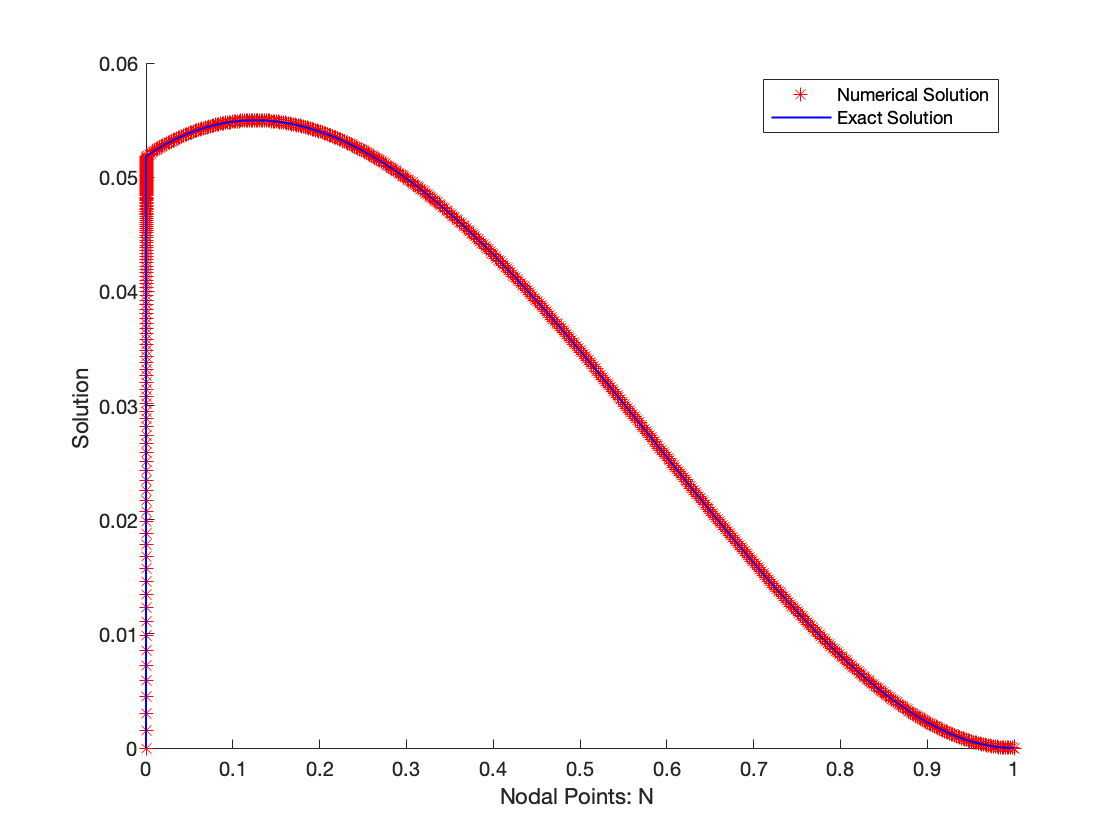}}
\caption{Shishkin mesh:  (a) $N= 32$, $\varepsilon=10^{-8}$; (b) $N= 128$, $\varepsilon=10^{-4}$; (c) $N= 1024$, $\varepsilon=10^{-6}$.}\label{comp2}
\end{figure}

\begin{figure}[!t]
\centering
\subfigure[]{\includegraphics[width=0.32\textwidth]{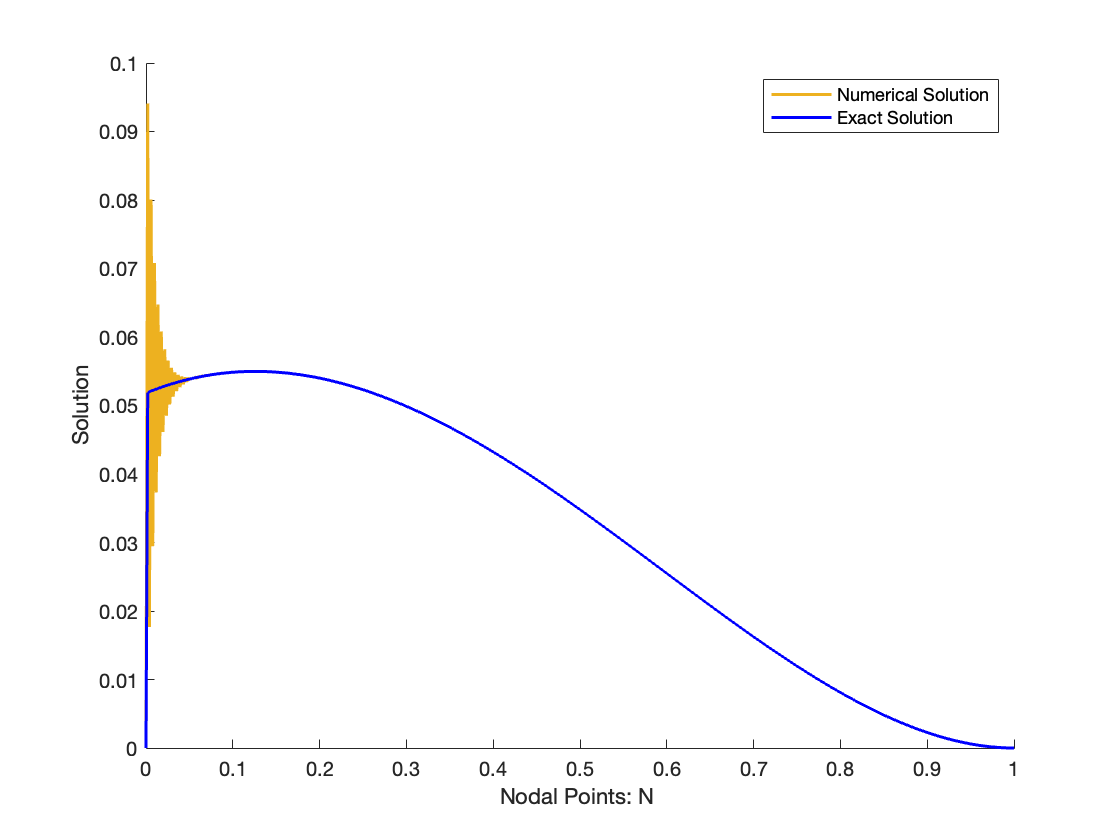}}
\subfigure[]{\includegraphics[width=0.32\textwidth]{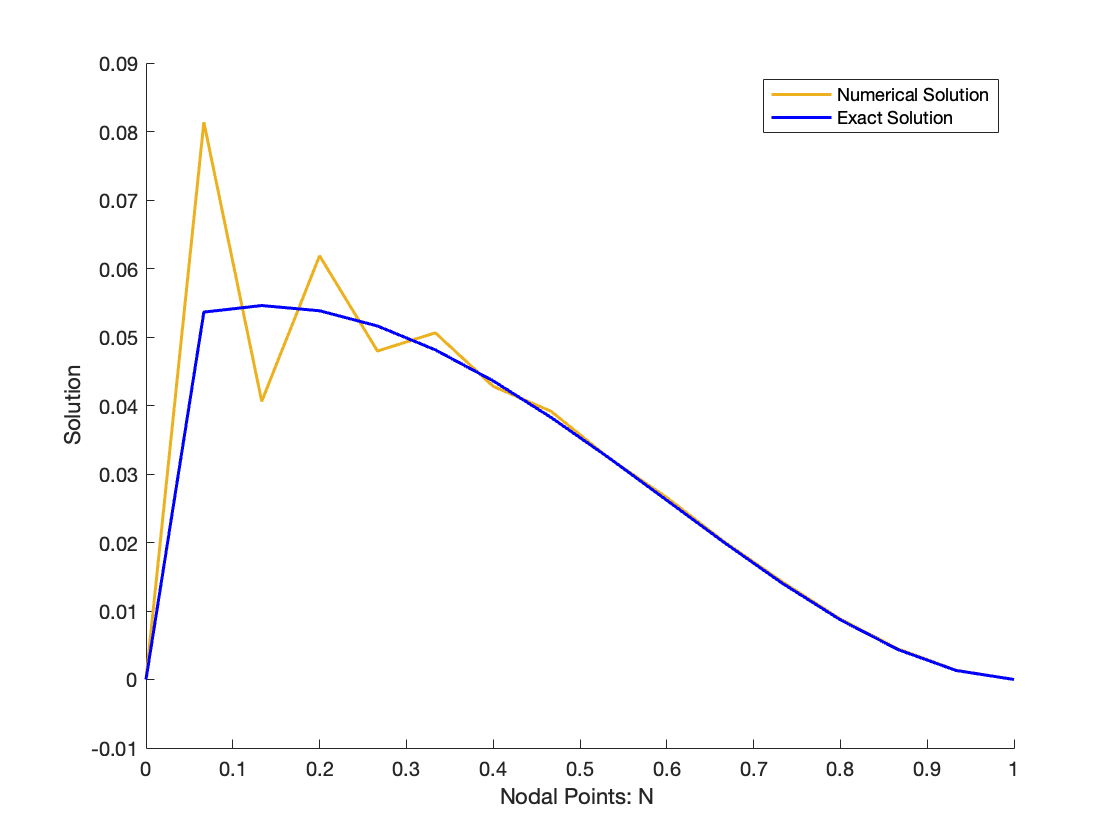}}
\subfigure[]{\includegraphics[width=0.32\textwidth]{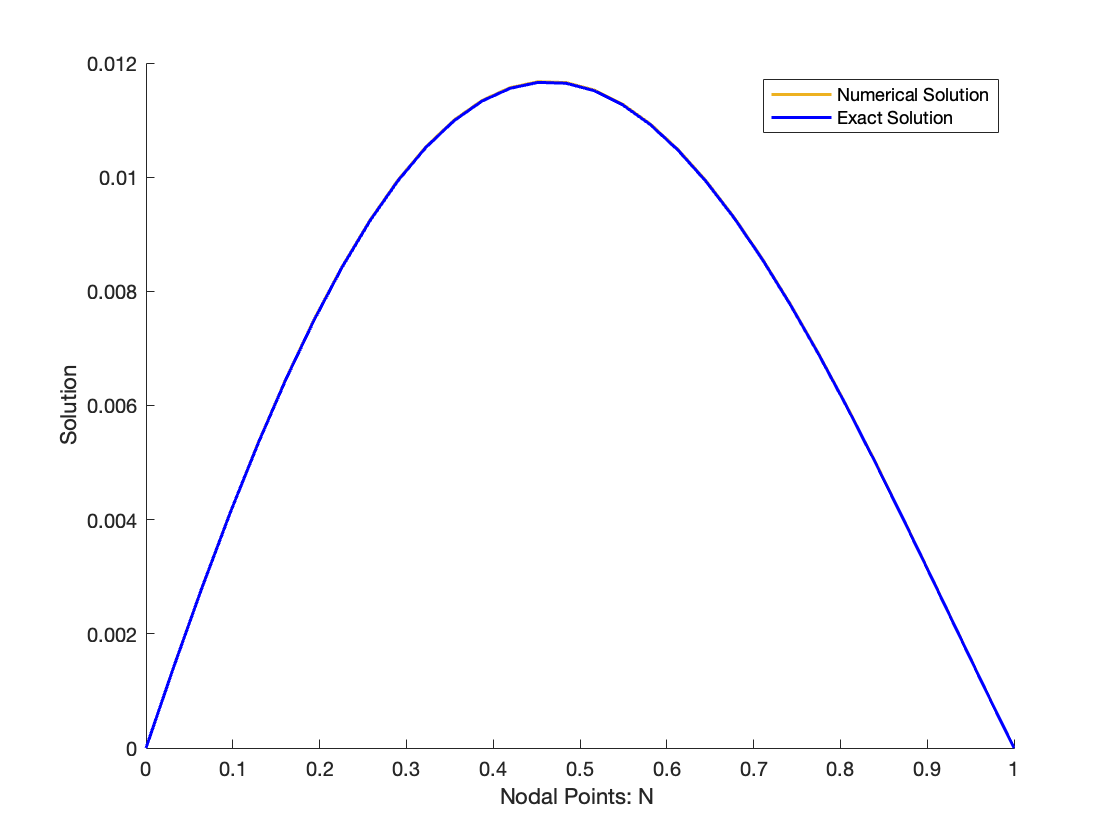}}
\caption{Uniform mesh: (a) $N= 512$, $\varepsilon=10^{-4}$; (b) $N= 16$, $\varepsilon=10^{-2}$; (c) $N= 32$, $\varepsilon= 1 $.}\label{comp2}
\end{figure}

\begin{figure}[!t]
\centering
\subfigure[]{\includegraphics[width=0.32\textwidth]{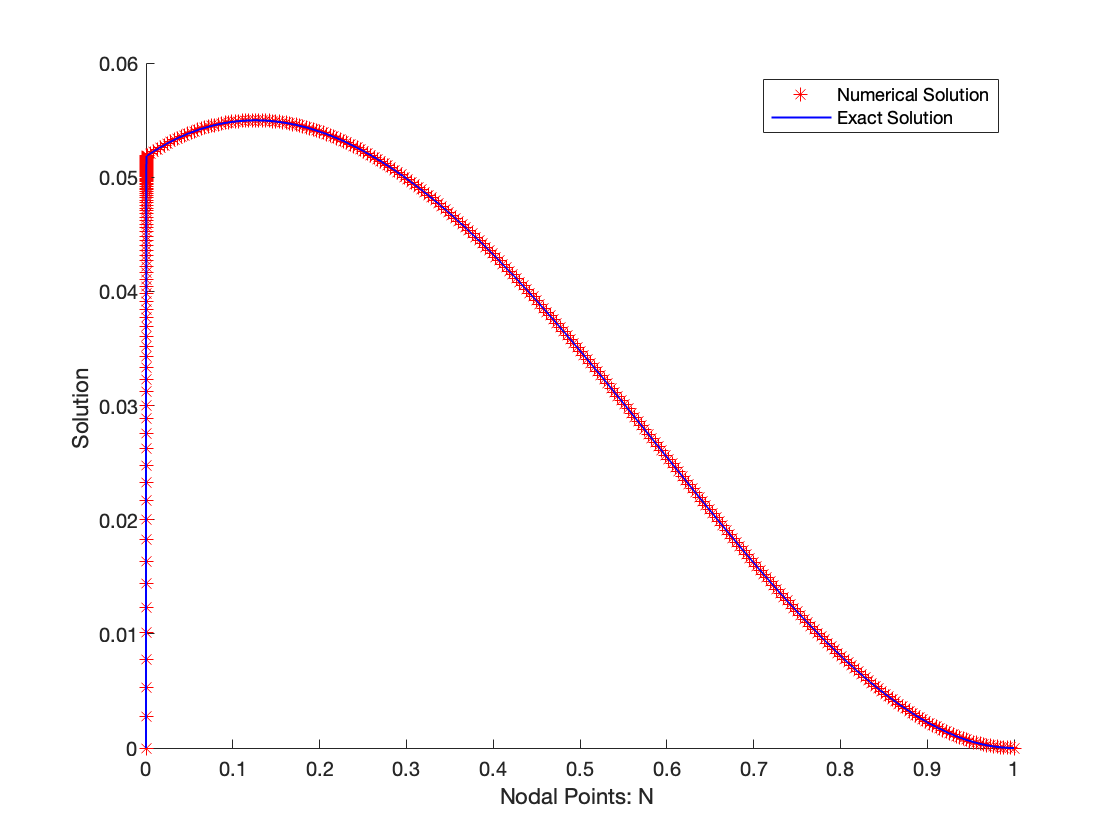}}
\subfigure[]{\includegraphics[width=0.32\textwidth]{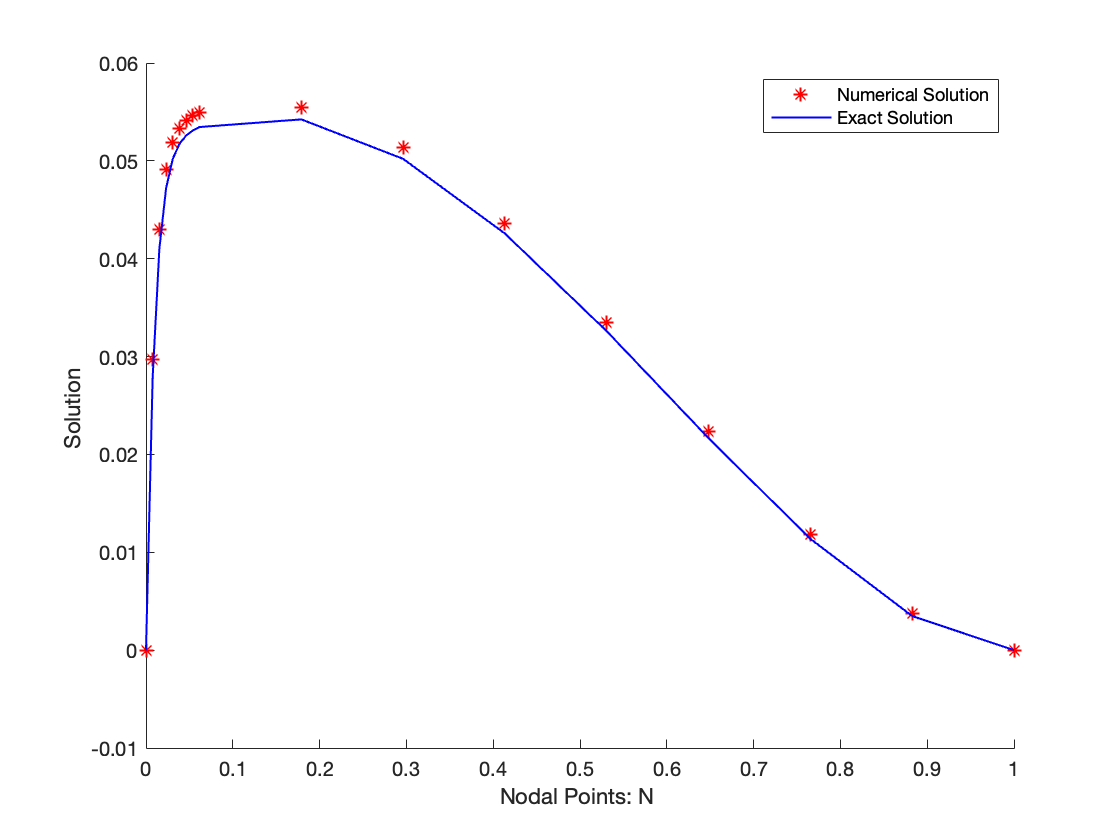}}
\subfigure[]{\includegraphics[width=0.32\textwidth]{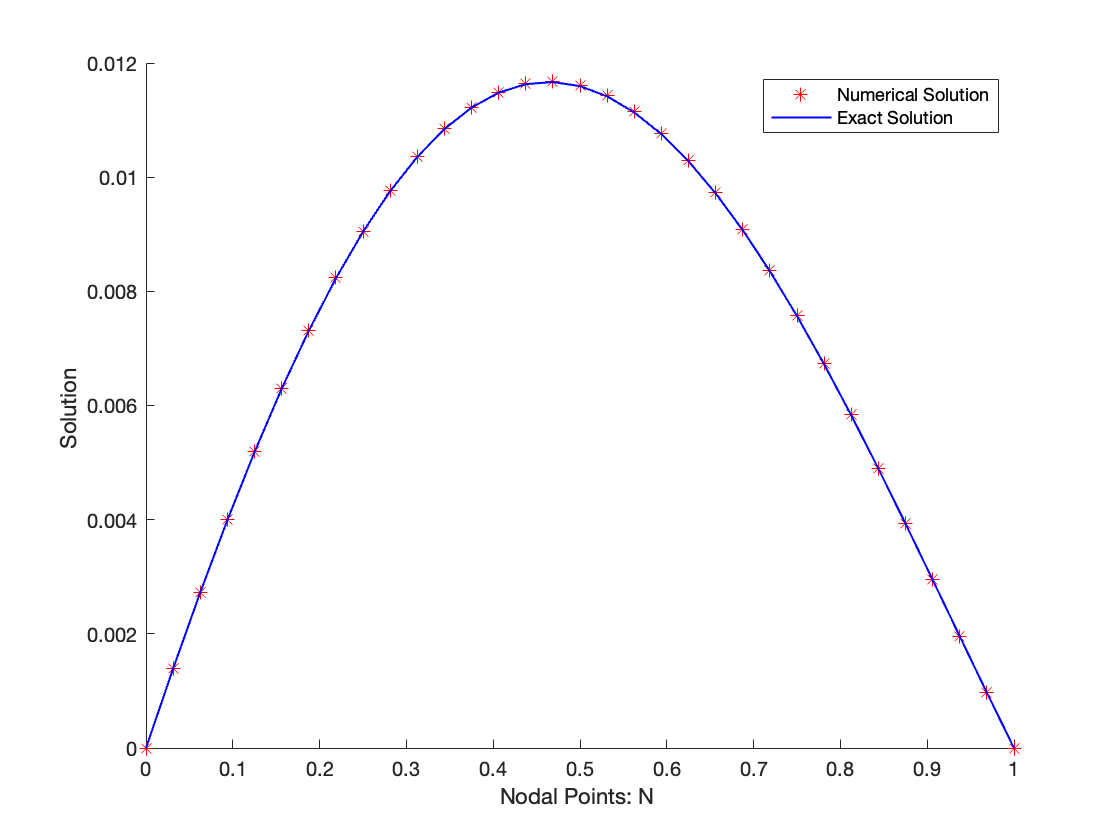}}
\caption{ Shishkin mesh: (a) $N= 512$, $\varepsilon=10^{-4}$; (b) $N= 16$, $\varepsilon=10^{-2}$; (c) $N= 32$, $\varepsilon= 1 $.}\label{comp2}
\end{figure}

\begin{table}[!h] 
\centering
\def\arraystretch{1.2}
\caption{$ \|u-u_n\|_{\infty} $ \mbox{for} $\varepsilon= 10^{-4}, 10^{-2 }$ \mbox{and}  1  $\mbox  {for uniform and Shishkin meshes} $.}
\tabcolsep=10pt
\vspace{-0.5\baselineskip}
\begin{tabular}[c]{|c|c|c|c|c|c|c|}
\hline
& \multicolumn{2}{|c |}{$\epsilon=10^{-4}$}& \multicolumn{2}{|c |}{$\epsilon=10^{-2}$} & \multicolumn{2}{| c|}{$\epsilon=1$}\\
\hline
$N$\multirow{12}{*}{} & {Uniform} & {Shishkin} & {Uniform} & {Shishkin} &{Uniform} &{Shishkin} \\
\cline{2-7}
\hline
 4 & 0.0355& 0.0558& 0.0371& 0.0429 & 0.0011 & 6.9689e-04 \\
\hline
8  & 0.0299 & 0.0152 & 0.0366& 0.0087 &2.2395e-04  & 1.7362e-04\\
\cline{1-7}
\hline
16  & 0.0306 & 0.0039 & 0.0277& 0.0020 & 4.9304e-05 & 4.3367e-05\\
\cline{1-7}
\hline
32  & 0.0350 & 9.8165e-04 & 0.0141 & 0.0010 & 1.1558e-05 & 1.0840e-05\\
\cline{1-7}
\hline
64  & 0.0450& 2.3380e-04 & 0.0046& 5.2399e-04 & 2.7982e-06 & 2.7117e-06\\
\cline{1-7}
\hline
128  & 0.0491 & 5.2171e-05 &   0.0010 & 2.8929e-04 & 6.8864e-07 & 6.7792e-07\\
\cline{1-7}
\hline
256  &   0.0467 & 1.5676e-05 & 2.4386e-04 & 1.5611e-04 & 1.7081e-07 & 1.6948e-07\\
\cline{1-7}
\hline
512  & 0.0422 & 5.3830e-06 & 6.0942e-05 & 8.2720e-05 & 4.2536e-08 & 4.2370e-08\\
\cline{1-7}
\hline
1024  & 0.0342 & 1.8953e-06 & 1.5157e-05 & 4.3196e-05 &1.0613e-08 & 1.0593e-08\\
\cline{1-7}
\hline
2048  & 0.0221 & 7.0754e-07 &3.7826e-06 & 2.2282e-05 &2.6510e-09 & 2.6486e-09\\
\cline{1-7}
\hline
4096  & 0.0097 & 2.8955e-07 & 9.4508e-07 & 1.1371e-05 & 6.6242e-10 & 6.6407e-10\\
\cline{1-7}
\hline
8192 &0.0027 & 1.3217e-07 & 2.3621e-07 & 5.7469e-06 & 1.6553e-10 & 1.7331e-10\\
\cline{1-7}
\hline

\end{tabular}\label{t2}
\end{table}

\end{example}

\begin{example} The purpose of this example is to demonstrate the convergent rates and validate the results in Theorem 5 and Lemma 5. Tables 3 and 4 present the convergent rates of the numerical solutions for different $\varepsilon$ values and mesh sizes $N$. In Table 3, for $\varepsilon=10^{-10}, 10^{-8}, 10^{-6}$, there is no convergent rate under a uniformly refined mesh due to the high singularly perturbed features of the problem. However, as we decrease the singular perturbed properties of the problem, the uniform mesh starts showing convergent rates. For example, in Table 4, when $\varepsilon=1$, the convergent rate $R=2$ is observed under a uniformly refined mesh, which is in strong agreement with Lemma 5. The convergent rates under a Shishkin mesh for $\varepsilon=10^{-10}, 10^{-8}, 10^{-6}, 10^{-4}, 10^{-2}$ are also in strong agreement with Theorem 5. This concludes that our proposed algorithm works well on fourth-order singularly perturbed problems like (\ref{e2}).

Figure 7 supports the same conclusion as the tables. It shows that there is no convergent rate for uniformly refined meshes with high singularly perturbed properties of the problem. However, with a Shishkin mesh, we can observe the expected convergent rates from Theorem 5.

\begin{table}[!h] 
\centering
\def\arraystretch{1.20}
\caption{Convergent rate $ R $ \mbox{ for} $\varepsilon= 10^{-10}, 10^{-8 } \mbox{ and }  10^{-6}  \mbox{ for uniform and Shishkin meshes} $.}
\tabcolsep=10pt
\tabcolsep=10pt
\vspace{-0.5\baselineskip}
\begin{tabular}[c]{|c|c|c|c|c|c|c|}
\hline
& \multicolumn{2}{|c |}{$\epsilon=10^{-10}$}& \multicolumn{2}{|c |}{$\epsilon=10^{-8}$} & \multicolumn{2}{| c|}{$\epsilon=10^{-6}$}\\
\hline
$N$\multirow{10}{*}{} & {Uniform} & {Shishkin} & {Uniform} & {Shishkin} &{Uniform} &{Shishkin} \\
\cline{2-7}

\hline
16  & 0.0210 & 1.9404 & 0.0210 & 1.9404 & 0.0204 & 1.9406\\
\cline{1-7}
\hline
32  & 0.0156 & 1.9737 & 0.0156 & 1.9737 & 0.0179 & 1.9740\\
\cline{1-7}
\hline
64  & 0.0134 & 1.9899 & 0.0135 & 1.9899 & 0.0230 & 1.9909\\
\cline{1-7}
\hline
128  & 0.0080 & 1.9975 &  0.0084 & 1.9976 & 0.0457 & 2.0006\\
\cline{1-7}
\hline
256  &   0.0043 & 2.0008 & 0.0059 & 2.0009 & 0.1372 & 2.0111\\
\cline{1-7}
\hline
512  & 0.0023 & 1.7275 & 0.0086 & 1.7280 & 0.3273 & 1.7333\\
\cline{1-7}
\hline
1024  & 0.0014 & 1.7051 & 0.0260 & 1.7067 & 0.2550 & 1.7325\\
\cline{1-7}
\hline
2048  & 0.0016 &1.7288 & 0.0919  & 1.7340 & 0.0059 & 1.7410\\
\cline{1-7}
\hline
4096  & 0.0044 & 1.7425 & 0.2631 & 1.7596 & 0.0117 & 1.7377\\
\cline{1-7}
\hline
8192 &0.0163 & 1.7330 & 0.3392 &1.7853 & 0.0236 & 1.7522\\
\cline{1-7}
\hline

\end{tabular}\label{rate1}
\end{table}

\begin{table}[!h] 
\centering
\def\arraystretch{1.20}
\caption{Convergent rate $ R $ \mbox{ for} $\varepsilon= 10^{-4}, 10^{-2 } \mbox{ and }  1  \mbox{ for uniform and Shishkin meshes} $.}
\tabcolsep=10pt
\vspace{-0.5\baselineskip}
\begin{tabular}[c]{|c|c|c|c|c|c|c|}
\hline
& \multicolumn{2}{|c |}{$\epsilon=10^{-4}$}& \multicolumn{2}{|c |}{$\epsilon=10^{-2}$} & \multicolumn{2}{| c|}{$\epsilon=1$}\\
\hline
$N$\multirow{10}{*}{} & {Uniform} & {Shishkin} & {Uniform} & {Shishkin} &{Uniform} &{Shishkin} \\
\cline{2-7}

\hline
16  & 0.0297 & 1.9514 & 0.4017 & 2.1152 & 2.1834 & 2.0012\\
\cline{1-7}
\hline
32  & 0.1943 & 2.0032 & 0.9771 & 0.9764 & 2.0927 &2.0003\\
\cline{1-7}
\hline
64  & 0.3657 & 2.0699 & 1.6094 & 0.9658 & 2.0464 & 1.9990\\
\cline{1-7}
\hline
128  & 0.1240 & 2.1640 & 2.1549 & 0.8570 & 2.0227 &2.0000 \\
\cline{1-7}
\hline
256  &   0.0711 & 1.7347 & 2.0872 & 0.8899 & 2.0113 & 2.0000\\
\cline{1-7}
\hline
512  & 0.1478 & 1.5421 & 2.0005 & 0.9163 & 2.0057 & 2.0000\\
\cline{1-7}
\hline
1024  & 0.3023 &1.5060 & 2.0074 & 0.9373 &2.0028 & 2.0000\\
\cline{1-7}
\hline
2048  & 0.6301 &1.4215 &2.0026  & 0.9551 &2.0012 & 1.9998\\
\cline{1-7}
\hline
4096  & 1.1936 & 1.2890 & 2.0009 & 0.9705 & 2.0007 & 1.9958\\
\cline{1-7}
\hline
8192 &1.8130 & 1.1314 & 2.0004 &0.9845 & 2.0006 & 1.9380\\
\cline{1-7}
\hline

\end{tabular}\label{rate2}
\end{table}

\begin{figure}[!t]
\centering
\subfigure[]{\includegraphics[width=0.49\textwidth]{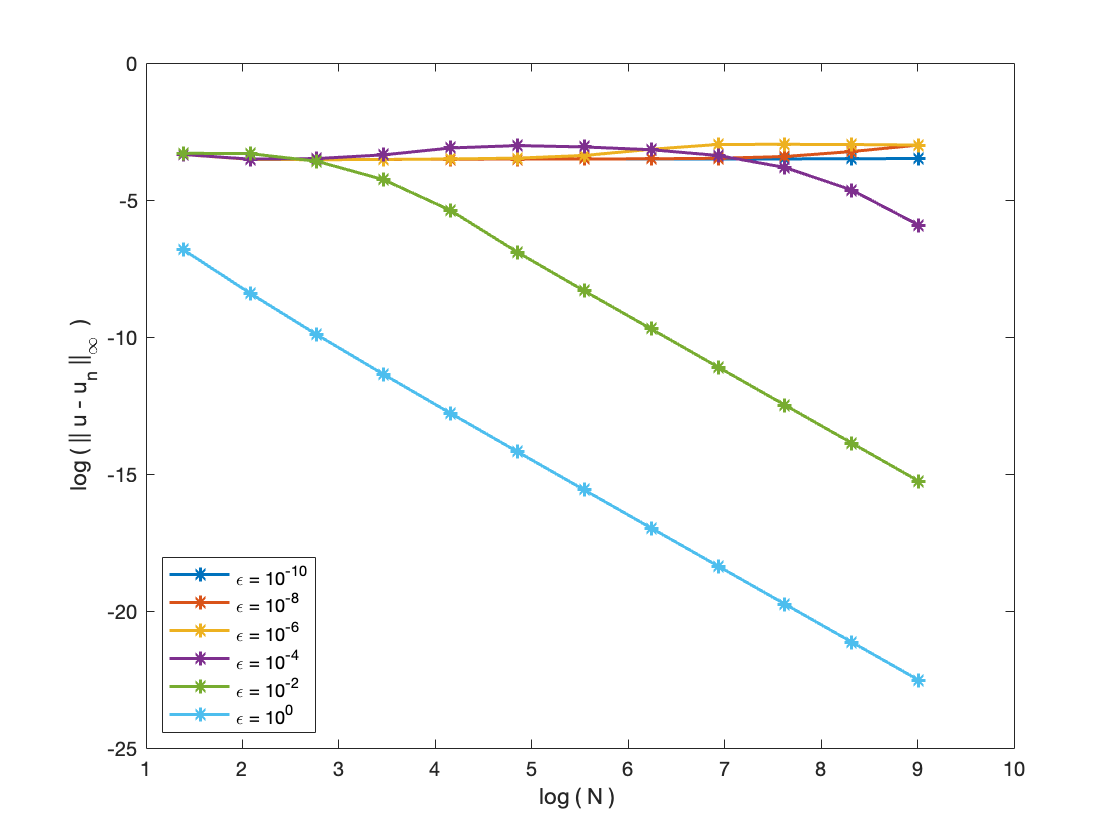}}
\subfigure[]{\includegraphics[width=0.49\textwidth]{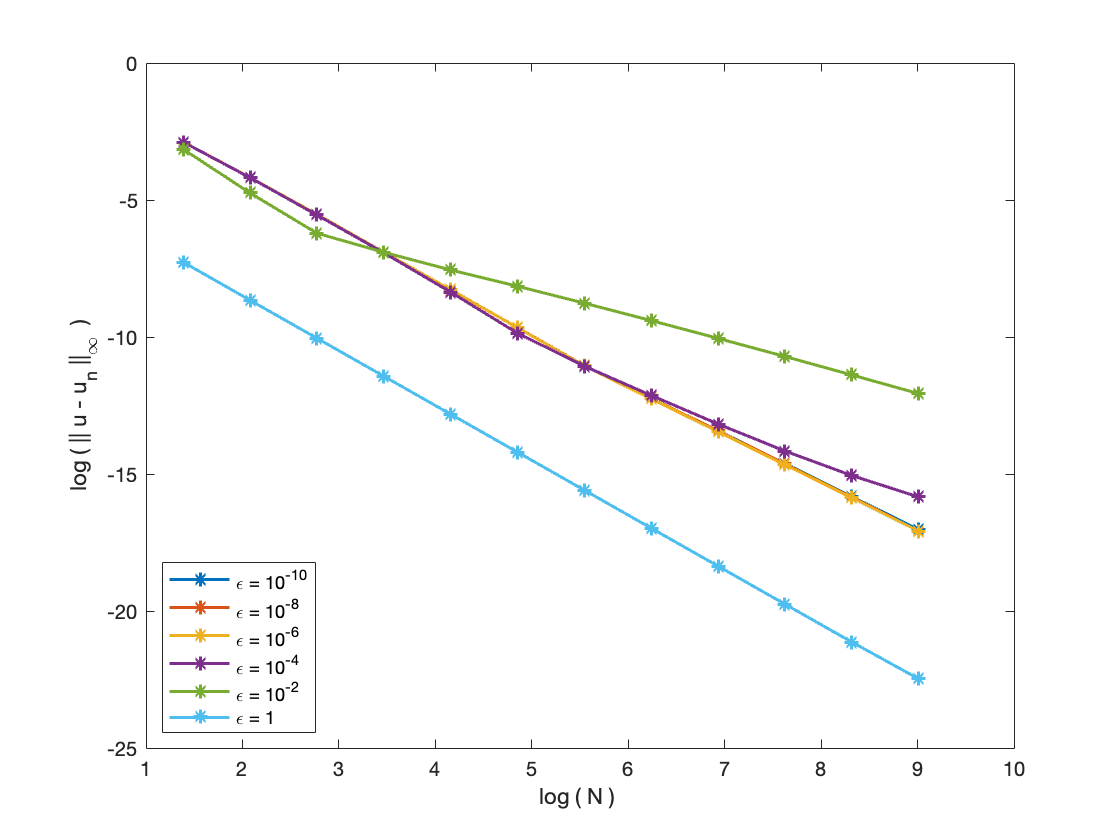}}

\caption{(a) Convergent rate plots under uniform mesh; (b) Convergent rate plots under Shishkin Rate .}\label{ratecomp}
\end{figure}

\end{example}

\begin{example} In this example, we are comparing the CPU time required by our finite element algorithm to solve problem (\ref{e2}) on uniform and Shishkin meshes. The results are presented in Table 5 and Table 6. From the tables, we can observe that the algorithm converges to the true solution faster under the Shishkin mesh compared to the uniform mesh.

\begin{table}[!h] 
\centering
\def\arraystretch{1.20}
\caption{The $L^\infty$ error $\|u_R^j-\phi_j\|_{L^\infty(\Omega)}$ in the L-shaped domain  on quasi-uniform meshes.}
\tabcolsep=10pt
\vspace{-0.5\baselineskip}
\begin{tabular}[c]{|c|c|c|c|c|c|c|}
\hline
& \multicolumn{2}{|c |}{$\epsilon=10^{-10}$}& \multicolumn{2}{|c |}{$\epsilon=10^{-8}$} & \multicolumn{2}{| c|}{$\epsilon=10^{-6}$}\\
\hline
$N$\multirow{6}{*}{} & {Uniform} & {Shishkin} & {Uniform} & {Shishkin} &{Uniform} &{Shishkin} \\
\cline{2-7}
\hline
512  & 0.05& 0.09 & 0.08 &0.08 & 0.05 & 0.07\\
\cline{1-7}
\hline
1024  & 0.13 &0.13 &0.15 & 0.11 &0.11 & 0.11\\
\cline{1-7}
\hline
2048  & 0.43 &0.26 &0.41  & 0.26 &0.40 & 0.27\\
\cline{1-7}
\hline
4096  & 1.43 & 0.76 & 1.42 & 0.78 & 1.24 & 0.81 \\
\cline{1-7}
\hline
8192 &5.21 & 2.64 & 4.90 &2.88 & 4.12 & 2.92\\
\cline{1-7}
\hline
16384  & 20.06 & 13.27 &18.69 & 14.05 & 17.69 & 13.52\\
\cline{1-7}

\end{tabular}\label{cputime1}
\end{table}

\begin{table}[!h] 
\centering
\def\arraystretch{1.20}
\caption{The $L^\infty$ error $\|u_R^j-\phi_j\|_{L^\infty(\Omega)}$ in the L-shaped domain  on quasi-uniform meshes.}
\tabcolsep=10pt
\vspace{-0.5\baselineskip}
\begin{tabular}[c]{|c|c|c|c|c|c|c|}
\hline
& \multicolumn{2}{|c |}{$\epsilon=10^{-4}$}& \multicolumn{2}{|c |}{$\epsilon=10^{-2}$} & \multicolumn{2}{| c|}{$\epsilon=1$}\\
\hline
$N$\multirow{6}{*}{} & {Uniform} & {Shishkin} & {Uniform} & {Shishkin} &{Uniform} &{Shishkin} \\
\cline{2-7}
\hline
512  & 0.06 & 0.07 & 0.05 & 0.09 &0.01 & 0.07\\
\cline{1-7}
\hline
1024  & 0.14 & 0.11 & 0.11 & 0.12 &0.05 & 0.11 \\
\cline{1-7}
\hline
2048  & 0.37 & 0.24 & 0.34  & 0.27 &0.22 & 0.27 \\
\cline{1-7}
\hline
4096  & 1.15 & 0.72 & 1.13 & 0.78 & 0.91 & 0.85\\
\cline{1-7}
\hline
8192 & 4.00 & 2.75 & 3.86 & 2.73 & 3.78 & 2.88\\
\cline{1-7}
\hline
16384  & 17.34 & 14.11 & 17.27 & 13.04 & 17.00 & 14.23\\
\cline{1-7}

\end{tabular}\label{cputime2}
\end{table}

\end{example}

\section{Conclusion}

It is worth noting that the proposed method can be applied to a wide range of singularly perturbed problems, not only the specific type of problem studied in this work. The decoupling technique presented in this paper can be adapted to other types of singularly perturbed problems, and the numerical scheme can be modified accordingly. Additionally, this work serves as a foundation for future research in the field of singularly perturbed problems, and can be used as a benchmark for comparing the performance of other numerical methods. Ultimately, we expect that it may be possible apply the proposed method for 2D with a completely new mesh  algorithm as an alternative to a standard Shishkin mesh as shown in the figure 6. This is the subject of an ongoing work. Overall, the proposed method provides a promising approach for solving singularly perturbed problems efficiently and accurately.

\begin{figure}[!t]
\centering
\includegraphics[width=1.0\textwidth]{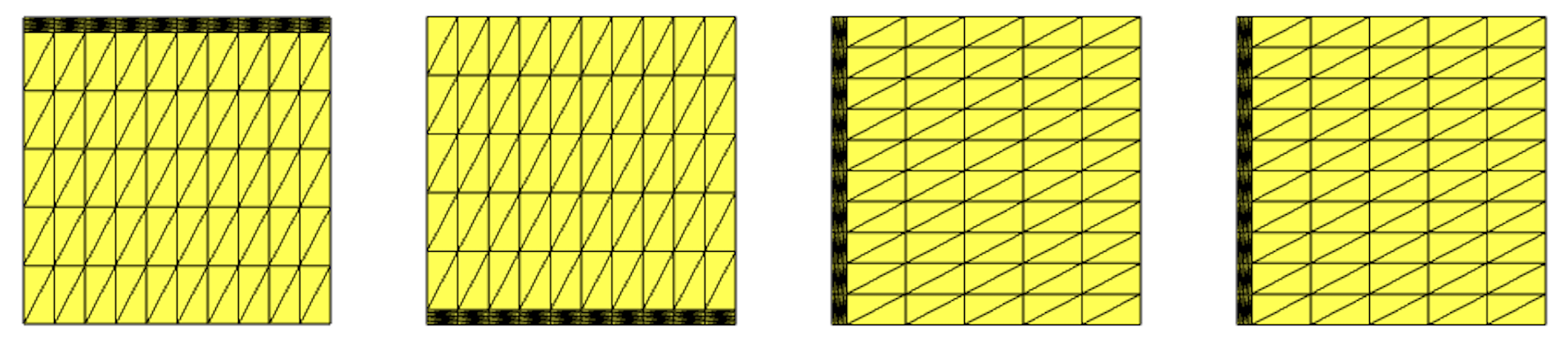}
\caption{Standard 2D Shishkin mesh with boundary layers on top, bottum, left and right.}\label{shi}
\end{figure}

\section*{Acknowledgments}

The author thank the anonymous reviewers for their valuable time.

\section*{Funding}
This research is supported by postdoctoral scholar program in the Department of Mathematics at the University of Kentucky in Lexington, Kentucky.

\end{document}